\newcounter{satznum}
\newtheorem{theorem}{Theorem}[satznum]
\newtheorem{lemma}[theorem]{Lemma}
\newtheorem{corollary}[theorem]{Corollary}
\newenvironment{acknowledgement}
 {\begin{trivlist}\item[]{\bf Acknowledgement.}}
 {\end{trivlist}}
\newenvironment{remark}
 {\begin{trivlist}\item[]{\bf Remark.}}
 {\end{trivlist}}
\newenvironment{remarks}
 {\begin{trivlist}\item[]{\bf Remarks.}}
 {\end{trivlist}}
\newenvironment{example}
 {\begin{trivlist}\item[]{\bf Example.}}
 {\end{trivlist}}
\newenvironment{proof}
 {\begin{trivlist}\item[]{\bf Proof.}}
 {\end{trivlist}}
\gdef\cz{{\mathbb C}} 
\gdef\nz{{\mathbb N}} 
\gdef\rz{{\mathbb R}} 
\begin{document}
   \section*{ON THE NUMBER OF ALLELIC TYPES FOR SAMPLES TAKEN FROM
   EXCHANGEABLE COALESCENTS WITH MUTATION}
   {\sc F. Freund and M. M\"ohle}\footnote{E-mail addresses: freund@math.uni-duesseldorf.de, moehle@math.uni-duesseldorf.de}
   Mathematisches Institut, Heinrich-Heine-Uni\-versit\"at D\"usseldorf,
   Universit\"atsstr. 1, 40225 D\"usseldorf, Germany
\begin{abstract}
   Let $K_n$ denote the number of types of a sample of size $n$
   taken from an exchangeable coalescent process ($\Xi$-coalescent)
   with mutation. A distributional recursion for the sequence
   $(K_n)_{n\in\nz}$ is derived. If the coalescent does not have
   proper frequencies, i.e., if the characterizing measure
   $\Xi$ on the infinite simplex $\Delta$ does not have mass at zero
   and satisfies $\int_\Delta |x|\Xi(dx)/(x,x)<\infty$,
   where $|x|:=\sum_{i=1}^\infty x_i$ and $(x,x):=\sum_{i=1}^\infty x_i^2$
   for $x=(x_1,x_2,\ldots)\in\Delta$,
   then $K_n/n$ converges weakly as $n\to\infty$ to a limiting
   variable $K$ which is characterized by an exponential integral of the
   subordinator associated with the coalescent process. For so-called
   simple measures $\Xi$ satisfying $\int_\Delta\Xi(dx)/(x,x)<\infty$
   we characterize the distribution of $K$ via a fixed-point equation.

   \vspace{2mm}

   \noindent Running head: Number of types for coalescents

   \vspace{2mm}

   \noindent Keywords: Coalescent; Distributional recursion; Fixed-point;
   Number of types; Simultaneous multiple collisions; Subordinator

   \vspace{2mm}

   \noindent AMS 2000 Mathematics Subject Classification:
            Primary 60C05;   
                    05C05    
            Secondary 60F05; 
                      92D15  
\end{abstract}
\subsection{Introduction and main results} \label{intro}
\setcounter{theorem}{0}
Exchangeable coalescents are Markovian processes with state space
${\cal E}$, the set of equivalence relations (partitions) on
$\nz:=\{1,2,\ldots\}$ with a block merging mechanism. The class
of exchangeable coalescents with multiple collisions has been
independently introduced by Pitman \cite{pitman} and Sagitov
\cite{sagitov}. These processes can be characterized by a finite
measure $\Lambda$ on the unit interval $[0,1]$ and are hence also
called $\Lambda$-coalescents. The best known example is the
Kingman coalescent where $\Lambda=\delta_0$ is the Dirac measure
in $0$. This coalescent allows only for binary mergers of ancestral
lineages. Another well studied coalescent is the Bolthausen-Sznitman
coalescent \cite{bolthausensznitman}, where $\Lambda$ is uniformly
distributed on $[0,1]$. The full class of exchangeable
coalescents allowing for simultaneous multiple collisions of
ancestral lineages was discovered by M\"ohle and Sagitov
\cite{moehlesagitov} and Schweinsberg \cite{schweinsberg2}.
Schweinsberg \cite{schweinsberg2} characterizes exchangeable
coalescents via a finite measure $\Xi$ on the infinite simplex
$\Delta:=\{x=(x_1,x_2,\ldots):x_1\geq x_2\geq\cdots\geq 0,
\sum_{i=1}^\infty x_i\leq 1\}$. For the following it is convenient
to decompose $\Xi=a\delta_0+\Xi_0$ with $a:=\Xi(\{0\})\in [0,\infty)$
and $\Xi_0$ having no atom at zero. Suppose that the coalescent is in a
state with $n$ blocks. Then each $(k_1,\ldots,k_j)$-collision
($k_1,\ldots,k_j\in\nz$ with $k_1+\cdots+k_j=n$,
$k_1\geq\cdots\geq k_j$ and $k_1\geq 2$)
is occurring at the rate (see \cite[Eq. (11)]{schweinsberg2})
\begin{eqnarray}
   &   & \hspace{-1cm}\phi_j(k_1,\ldots,k_j)
   \ = \ a\,1_{\{r=1,k_1=2\}}\nonumber\\
   &   & + \int_\Delta\sum_{l=0}^s {s\choose l}(1-|x|)^{s-l}
         \sum_{{i_1,\ldots,i_{r+l}\in\nz}\atop{\rm all\ distinct}}
         x_{i_1}^{k_1}\cdots x_{i_{r+l}}^{k_{r+l}}
         \frac{\Xi_0(dx)}{(x,x)}, \label{xirates1}
\end{eqnarray}
where $s:=|\{1\leq i\leq j:k_i=1\}|$, $r:=j-s$,
$|x|:=\sum_{i=1}^\infty x_i$ and $(x,x):=\sum_{i=1}^\infty x_i^2$
for $x=(x_1,x_2,\ldots)\in\Delta$. Note that $\phi_1(2)=\Xi(\Delta)$.

For $n\in\nz$ let $\varrho_n:{\cal E}\to{\cal E}_n$ denote the
natural restriction to the set ${\cal E}_n$ of all equivalence relations
on $\{1,\ldots,n\}$. Let $R=(R_t)_{t\ge 0}$ be a coalescent process
with simultaneous multiple collisions. The restricted coalescent
process $(\varrho_nR_t)_{t\ge 0}$ is usually interpreted as a
genealogical tree of a sample of $n$ individuals.

In the biological context it is natural to introduce mutations
into this model as follows. Assume that each individual has a
certain type. Independently of the genealogical tree mutations
occur along each branch of the tree according to a homogeneous
Poisson process with rate $r>0$. The infinitely many alleles model
is assumed, i.e., each mutation leads to a new type never seen
before in the sample.

Recently there is much interest in the study of functionals of
restricted coalescent processes $(\varrho_nR_t)_{t\ge 0}$, for
example the number of collisions
\cite{drmotaiksanovmoehleroesler2, gnedinyakubovich,
iksanovmarynychmoehle, iksanovmoehle1}, the time back to the most
recent common ancestor and the lengths of external branches
\cite{caliebeneiningerkrawczakroesler, delmasdhersinsirijegousse,
freundmoehle1}, the total branch length
\cite{drmotaiksanovmoehleroesler1} or the number of segregating
sites \cite{moehlesites}.

Further typical quantities of interest are $K_i(n)$, the number of
types which appear exactly $i$ times in a sample of size $n$, and
the summary statistics $K_n:=\sum_{i=1}^n K_i(n)$, the total
number of types in the sample. The most celebrated result in this
context is the Ewens sampling formula \cite{ewens} for the
distribution of the allele frequency spectrum
$(K_1(n),\ldots,K_n(n))$ under the Kingman coalescent. Recently
asymptotic results for the allele frequency spectrum have been
obtained by Berestycki, Berestycki and Schweinsberg
\cite{berestyckischweinsberg1,berestyckischweinsberg2} for
beta$(2-\alpha,\alpha)$-coalescents with parameter $1<\alpha<2$
and by Basdevant and Goldschmidt \cite{basdevantgoldschmidt} for
the Bolthausen-Sznitman coalescent \cite{bolthausensznitman}. Here
we are interested in the total number $K_n$ of types of a sample
of size $n\in\nz$ taken from a $\Xi$-coalescent with mutation rate
$r>0$. The motivation for our interest in $K_n$ is manifold. It is
an observable quantity and hence important for biological
and statistical applications. In combination with the results
of \cite{moehleewens} on the allele frequency spectrum and of
\cite{moehlesites} on the number of segregating sites, our
study of $K_n$ gives additional insight in the structure of
exchangeable coalescent trees. Our first result (Theorem \ref{main1}
below) provides a distributional recursion for the sequence
$(K_n)_{n\in\nz}$. In order to state the result we need to
introduce the rates
\begin{equation} \label{rates}
   g_{nk}\ :=\ \lim_{t\searrow 0}\frac{P(|\varrho_nR_t|=k)}{t},
   \quad n,k\in\nz, k<n,
\end{equation}
and the total rates
\begin{equation} \label{totalrates}
   g_n\ :=\ \lim_{t\searrow 0}\frac{P(|\varrho_nR_t|<n)}{t}
   \ =\ \sum_{k=1}^{n-1}g_{nk},\qquad n\in\nz.
\end{equation}
The total rates $g_n$, $n\in\nz$, can be expressed in terms of
the measure $\Xi=a\delta_0+\Xi_0$ as (see Schweinsberg
\cite[p.~36, Eq.~(70)]{schweinsberg2})
\begin{equation} \label{totalrates2}
   g_n\ =\ a{n\choose 2} + \int_\Delta
   \Bigg(
      1-(1-|x|)^n-\sum_{j=1}^n{n\choose j}
      \sum_{{i_1,\ldots,i_j\in\nz}\atop{\rm all\,distinct}}
      x_{i_1}\cdots x_{i_j}
   \Bigg)\frac{\Xi_0(dx)}{(x,x)}.
\end{equation}
A similar argument shows that the rates (\ref{rates}) are given as
\begin{equation} \label{rates2}
   g_{nk}\ =\ a{n\choose 2}1_{\{k=n-1\}} +
   \int_\Delta \sum_{j=1}^k f_{nkj}(x)\frac{\Xi_0(dx)}{(x,x)},
   \quad n,k\in\nz, k<n,
\end{equation}
with
$$
f_{nkj}(x)\ :=\ \sum_{{i_1,\ldots,i_j\in\nz}\atop{\rm all\,distinct}}
\sum_{{n_1,\ldots,n_j\in\nz}\atop{n_1+\cdots+n_j=n-k+j}}
\frac{n!}{(k-j)!n_1!\cdots n_j!}(1-|x|)^{k-j} x_{i_1}^{n_1}\cdots x_{i_j}^{n_j}
$$
for $n,k\in\nz$ with $k<n$ and $j\in\{1,\ldots,k\}$.
The $\Lambda$-coalescent occurs, if the measure $\Xi$ is concentrated on
the points $x=(u,0,0,\ldots)\in\Delta$ with $u\in [0,1]$ and can be hence
considered as a measure $\Lambda$ on the unit interval $[0,1]$. In this case
only the index $j=1$ contributes to the sum below the integral in (\ref{rates2})
and from $f_{nk1}(u,0,0,\ldots)={n\choose{k-1}}(1-u)^{k-1}u^{n-k+1}$
it follows that (\ref{rates2}) takes the form 
\begin{equation}
   g_{nk}\ =\ {n\choose{k-1}}\int_{[0,1]}u^{n-k-1}(1-u)^{k-1}\,\Lambda(du),
   \quad n,k\in\nz, k<n.
\end{equation}
Similarly, for the $\Lambda$-coalescent the total rates (\ref{totalrates2}) are
given as
\begin{equation}
   g_n\ =\
   \int_{[0,1]}\frac{1-(1-u)^n-nu(1-u)^{n-1}}{u^2}\,\Lambda(du),\qquad n\in\nz.
\end{equation}
Our first main result is the following distributional recursion
for the number of types $K_n$.
\begin{theorem} \label{main1}
   The sequence $(K_n)_{n\in\nz}$ satisfies the distributional recursion
   \begin{equation} \label{knrec}
   K_1=1\mbox{ and } K_n\ \stackrel{d}{=}\ B_n(K_{n-1}+1) + (1-B_n)K_{I_n},
   \quad n\in\{2,3,\ldots\},
   \end{equation}
   where $B_n$ is a Bernoulli variable independent of $(K_2,\ldots,K_{n-1},I_n)$
   with distribution
   $$
   P(B_n=1)\ =\ 1-P(B_n=0)\ =\ \frac{nr}{g_n+nr},\quad n\in\nz,
   $$
   and $I_n$ is a random variable independent of $(K_2,\ldots,K_{n-1})$
   with distribution
   \begin{equation} \label{indist}
      r_{nk}\ :=\ P(I_n=k)\ =\ \frac{g_{nk}}{g_n},\qquad n,k\in\nz, k<n.
   \end{equation}
\end{theorem}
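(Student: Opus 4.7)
The plan is to condition on the nature of the first event as one traces the restricted coalescent $(\varrho_nR_t)_{t\ge 0}$ backward in time from the sample of $n$ leaves, with independent rate-$r$ Poisson mutation clocks running along each branch. Starting from the state with $n$ lineages, mutations arrive at total rate $nr$ and mergers strictly reducing the number of blocks arrive at total rate $g_n$; these two streams are independent, so the first event occurs at an exponential time of rate $g_n+nr$ and is, independently of its time, a mutation with probability $nr/(g_n+nr)$ and a merger from $n$ to $k<n$ blocks with probability $g_{nk}/(g_n+nr)$. Defining $B_n$ as the indicator of a mutation and $I_n$ as the number of blocks after the first merger immediately recovers the distributions claimed for $B_n$ and $I_n$.

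On the event $\{B_n=1\}$ the mutation by exchangeability hits a uniformly chosen lineage, and by the infinitely many alleles hypothesis it endows that lineage with a type new to the entire sample, contributing a deterministic $1$ to $K_n$. The remaining $n-1$ lineages continue, after the first event, as a $\Xi$-coalescent with mutation of initial size $n-1$, because $\Xi$-coalescents are consistent under the restriction maps $\varrho_n$ and because independent rate-$r$ clocks on the surviving $n-1$ branches remain a valid mutation process; by the strong Markov property they contribute an independent copy of $K_{n-1}$, so $K_n\stackrel{d}{=}1+K_{n-1}$ on $\{B_n=1\}$. On $\{B_n=0,I_n=k\}$ the first event groups the $n$ leaves into $k$ blocks; since no mutation has yet occurred, all leaves in a common block necessarily carry the same type, so $K_n$ equals the number of types carried by the $k$ super-lineages. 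These super-lineages themselves evolve as a $\Xi$-coalescent with mutation of initial size $k$ and thus contribute an independent copy of $K_k$.

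Combining the two cases yields the recursion (\ref{knrec}), with all independence requirements supplied by the strong Markov property applied at the time of the first event. I expect the main technical obstacle to be making this first-event decomposition completely rigorous: one must verify that superposing the Poisson mutation clocks on a $\Xi$-coalescent produces a well-defined Markov jump process whose holding time at the $n$-block state is exponential of rate $g_n+nr$ and whose jump kernel splits cleanly into the described mutation and merger pieces, and one must invoke consistency of $\Xi$-coalescents under sampling to ensure that after the mutated leaf is "sealed off" the remaining lineages genuinely form a fresh $\Xi$-coalescent of size $n-1$ with an independent superimposed mutation process. These facts are standard for exchangeable coalescents but warrant either an explicit short argument or a pointer to \cite{schweinsberg2}.
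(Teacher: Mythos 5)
Your proposal is correct and follows essentially the same route as the paper: a first-event decomposition backwards in time, with the competing exponential clocks giving the law of $B_n$ and $I_n$, consistency under restriction (Kingman's natural coupling) handling the mutation case, and the strong Markov property at the first jump (temporal coupling) handling the collision case, where the type count is unchanged because no mutation has yet occurred. The only cosmetic difference is that the paper removes the mutated individual and invokes consistency directly on the remaining sample of size $n-1$, rather than restarting the $n-1$ surviving lineages at the time of the first event, but the two formulations are equivalent.
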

Note that $I_n$ is the number of equivalence classes (blocks) of
the restricted coalescent process $(\varrho_nR_t)_{t\ge 0}$ after
its first jump.

The proof of Theorem \ref{main1} given in Section \ref{recursion}
involves a combination of what Kingman \cite{kingman1} calls
{\em natural coupling} and {\em temporal coupling}. The main argument
of the proof is the same as that used in \cite{moehleewens} and
\cite{moehlesites} for deriving similar recursions for the allele
frequency spectrum and the number of segregating sites.
The recursion for the summary statistics $K_n$ is simpler than that
for the allele frequency spectrum presented in \cite{moehleewens}.
It is therefore more useful to compute the distribution
and other related functionals of the distribution of $K_n$ for
moderate values of $n$ in reasonable time. Moreover, Theorem
\ref{main1} is valid for any arbitrary $\Xi$-coalescent.

Our second result (Theorem \ref{main2} below) concerns measures $\Xi$
satisfying
\begin{equation} \label{cond}
   \Xi(\{0\})\ =\ 0
   \quad\mbox{and}\quad
   \int_{\Delta\setminus\{0\}}\frac{|x|}{(x,x)}\Xi(dx)\ <\ \infty.
\end{equation}
Recall that $|x|:=\sum_{i=1}^\infty x_i$ and that
$(x,x):=\sum_{i=1}^\infty x_i^2$ for
$x=(x_1,x_2,\ldots)\in\Delta$. Note that (\ref{cond}) prevents
$\Xi$ from having too much mass near zero. Schweinsberg
\cite[Prop. 30]{schweinsberg2} showed that the $\Xi$-coalescent
does not have proper frequencies if and only if (\ref{cond})
holds. Not having proper frequencies is equivalent to having a
positive fraction of singleton blocks with positive probability,
which is actually most important for our convergence result
presented in Theorem \ref{main2} below. For the special class of
coalescent processes with multiple collisions ($\Lambda$-coalescents),
Eq. (\ref{cond}) takes the form
\begin{equation} \label{cond2}
   \Lambda(\{0\})\ =\ 0
   \quad\mbox{and}\quad
   \int_{(0,1]} u^{-1}\Lambda(du)\ <\ \infty.
\end{equation}
Pitman \cite[Theorem 8]{pitman} already showed that the
$\Lambda$-coalescent does not have proper frequencies if and only
if (\ref{cond2}) holds. Condition (\ref{cond2}) excludes important
examples such as the Kingman coalescent and the
Bolthausen-Sznitman coalescent \cite{bolthausensznitman}. However,
it includes for example all beta$(a,b)$-coalescents with
parameters $a>1$ and $b>0$, which are studied in more detail in
Section \ref{examples}. Note that Theorem \ref{main2} covers a
substantial class of $\Xi$-coalescents.
\begin{theorem} \label{main2}
   Suppose that the characterizing measure $\Xi$ of the exchangeable
   coalescent $(R_t)_{t\ge 0}$ satisfies (\ref{cond}). Then $K_n/n$
   converges weakly as $n\to\infty$ to $K:=r\int_0^\infty e^{-rt}e^{-X_t}dt$,
   where $X=(X_t)_{t\ge 0}$ is a subordinator with Laplace exponent
   $$
   \Phi(\eta)\ =\ \int_{\Delta\setminus\{0\}}(1-(1-|x|)^\eta)\frac{\Xi(dx)}{(x,x)},
   \qquad\eta\ge 0.
   $$
   The limiting variable $K$ has moments
   \begin{equation} \label{kj}
      {\rm E}(K^j)\ =\ \frac{r^jj!}{(r+\Phi(1))(2r+\Phi(2))\cdots(jr+\Phi(j))},
      \qquad j\in\nz.
   \end{equation}
\end{theorem}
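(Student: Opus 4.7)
The plan is to construct the $\Xi$-coalescent, the subordinator $X$ and the mutation process on a common probability space and then use a compensator--martingale argument to reduce the limit behaviour of $K_n$ to that of the ``alive block'' count. I would proceed in four steps.

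First, I set up the joint construction. Following Schweinsberg's Poissonian representation, let $\mathcal{P}$ be a Poisson point process on $(\Delta\setminus\{0\})\times[0,\infty)$ with intensity $\Xi(dx)/(x,x)\otimes dt$, from which $R$ is a deterministic function. Set
\[
X_t\ :=\ -\sum_{(x,s)\in\mathcal{P},\,s\le t}\log(1-|x|),\qquad t\ge 0,
\]
a pure-jump subordinator with Laplace exponent $\Phi$ (finite on $[0,\infty)$ since $1-(1-|x|)^\eta\le\eta|x|$ for $\eta\ge 1$ and $\Phi(1)<\infty$ under (\ref{cond})). Attach to each branch of the coalescent tree an independent rate-$r$ Poisson process of mutations, and let $\tau_i$ be the time of the first backward mutation on leaf $i$'s ancestral line (marginally $\mathrm{Exp}(r)$). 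Call leaf $i$ \emph{alive} at time $t$ if $\tau_i>t$, a block of $\varrho_nR_t$ alive if it contains at least one alive leaf, and write $A_t$ for the number of alive blocks.

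Second, I would establish the central asymptotic $A_t/n\to e^{-rt}e^{-X_t}$ in probability as $n\to\infty$, jointly with the coupling above. Two ingredients are needed: by \cite[Prop.~30]{schweinsberg2} and Kingman's paintbox, under (\ref{cond}) the dust mass of $R_t$ equals $e^{-X_t}$, whence the number of singleton blocks of $\varrho_nR_t$ divided by $n$ converges a.s.\ to $e^{-X_t}$ and the number of non-singleton blocks is $o(n)$; and, conditionally on the coalescent, the rate-$r$ mutation processes on distinct singleton ancestral lines are genuinely independent, so the law of large numbers yields that the fraction of alive singletons among all singletons converges a.s.\ to $e^{-rt}$.

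Third, $K_n$ equals the number of mutations that fall on alive blocks -- such a mutation is the first backward mutation of every alive leaf in the block and contributes exactly one new allelic type. The associated counting process $N_t$ has predictable compensator $\int_0^t rA_s\,ds$ in the natural filtration generated by the coalescent and the mutations, so $M_t:=N_t-\int_0^t rA_s\,ds$ is a square-integrable martingale with $\langle M\rangle_t=\int_0^t rA_s\,ds$. Bounding $A_s\le L_s$ (the number of alive leaves) and using $\int_0^\infty L_s\,ds=\sum_{i=1}^n\tau_i$ gives $\mathrm{E}[K_n]=\mathrm{E}\!\int_0^\infty rA_s\,ds\le n$, so $\mathrm{Var}(K_n-\int_0^\infty rA_s\,ds)\le n$ and consequently $K_n/n-\int_0^\infty r(A_s/n)\,ds\to 0$ in $L^2$. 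Since $r\mathrm{E}[A_s/n]\le re^{-rs}$ is integrable in $s$, Fubini and dominated convergence combined with the Step~2 pointwise limit yield $\int_0^\infty r(A_s/n)\,ds\to r\int_0^\infty e^{-rt}e^{-X_t}\,dt=K$ in $L^1$, and hence $K_n/n\to K$ weakly. Finally, the moment formula (\ref{kj}) comes out of Fubini and the independent-increments structure of $X$: writing $\mathrm{E}[K^j]=r^jj!\int_{0\le t_1\le\cdots\le t_j}e^{-r(t_1+\cdots+t_j)}\mathrm{E}[e^{-\sum_k X_{t_k}}]\,dt_1\cdots dt_j$ and decomposing $\sum_k X_{t_k}=\sum_{k=1}^j(j-k+1)(X_{t_k}-X_{t_{k-1}})$ (with $t_0:=0$), the inner expectation factors into $\prod_k e^{-(t_k-t_{k-1})\Phi(j-k+1)}$, and after substituting $s_k:=t_k-t_{k-1}$ the integral collapses to $\prod_{m=1}^j(rm+\Phi(m))^{-1}$. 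The main obstacle is Step~2: upgrading the paintbox statements about asymptotic frequencies into the joint-in-$t$ convergence $A_t/n\to e^{-X_t-rt}$ requires careful handling of the $o(n)$ non-singleton blocks and of the correlations among the $\tau_i$'s along merged branches, the crucial simplification being that on distinct singleton lines the Poisson mutation processes are genuinely independent.
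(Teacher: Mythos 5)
Your route is essentially correct but genuinely different from the one in the paper. The paper sandwiches $K_n$ between the number $M_n$ of mutated external branches and the total number $N_n+1$ of mutated branches: Lemma \ref{mnconv} proves $M_n/n\to M$ by the method of moments (exchangeability of the events $E_{n,i}$, convergence of finite collections of external branch lengths, Hausdorff moment problem), Corollary \ref{char} identifies $M$ with $r\int_0^\infty e^{-rt}S_t\,dt$ via an inclusion--exclusion/Fubini identity for $P(L_1>t_1,\ldots,L_k>t_k)={\rm E}(S_{t_1}\cdots S_{t_k})$, the moment formula is quoted from Carmona--Petit--Yor, and finally $N_n+1-M_n\le C_n$ with $C_n/n\to 0$ in $L^1$ proved by a Gnedin-style contradiction argument resting on ${\rm E}(n-I_n)/{\rm E}(V_n)\to\infty$ (Lemma \ref{lemma2}, Corollary \ref{appendix5}). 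You instead write $K_n$ (up to an $O(1)$ correction for the ancestral type) as a point process of ``type-creating'' mutations, compensate it by $\int_0^t rA_s\,ds$, and identify the limit of $A_t/n$ directly. Your decomposition of $K_n$ is exact rather than a sandwich, your convergence is in $L^1$ on a joint probability space rather than merely weak, and your direct computation of ${\rm E}(K^j)$ via independent increments replaces the citation of \cite{carmonapetityor}; all of these steps check out (including the martingale variance bound and the dominated-convergence passage to $\int_0^\infty re^{-rt-X_t}dt$). What the paper's approach buys is that it never needs joint control of the block structure and the mutation marks at a fixed time; it only needs marginal convergence of external branch lengths plus a crude global bound on the number of internal branches.

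Two caveats. First, the assertion in your Step~2 that the number of non-singleton blocks of $\varrho_nR_t$ is $o(n)$ is the crux of your argument and does \emph{not} follow from Proposition~30 of \cite{schweinsberg2} alone; it is the analogue of the paper's Lemma \ref{lemma2} and needs a proof. It is true, and at a fixed time it is easier than the paper's bound on the total number of collisions: by Kingman's paintbox every non-singleton block $B$ of $R_t$ has a strictly positive asymptotic frequency $f_B$, and conditionally on the frequencies the expected number of blocks meeting $\{1,\ldots,n\}$ in at least two points is at most $\sum_B\min\bigl({n\choose 2}f_B^2,1\bigr)$, which is $o(n)$ because $\sum_Bf_B\le 1$ (split at $f_B\le 1/n$); the same computation controls the discrepancy between singletons of $\varrho_nR_t$ and singletons of $R_t$. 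You should write this out. Second, your bound ${\rm E}(A_s)\le ne^{-rs}$ and the finiteness of $\int_0^\infty rA_s\,ds$ require the mutation processes to run on ancestral lines extended beyond the most recent common ancestor (so that each $\tau_i$ is genuinely ${\rm Exp}(r)$ and the root block eventually dies); if the tree is stopped at the MRCA the integral diverges on the event that the root block is still alive. With the extension convention one even gets $K_n=N_\infty$ exactly, the first post-MRCA mutation accounting for the ancestral type.
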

We will see that the subordinator $X$ appearing in Theorem
\ref{main2} is related to the frequency $S_t$ of singletons of $R_t$ via
$X_t=-\log S_t$, $t\ge 0$.
Our proof of Theorem \ref{main2} is not based on the recursion
presented in Theorem \ref{main1}. It is rather a consequence of the
chain of inequalities
\begin{equation}
   M_n\ \le\ K_n\ \leq\ N_n+1,
\end{equation}
where $M_n$ denotes the number of mutated external branches and
$N_n$ denotes the total number of mutated branches of the
restricted coalescent tree $(\varrho_nR_t)_{t\ge 0}$ respectively.
Here we call a branch mutated, if it is affected by at least one
mutation. In a first step it is shown in Section \ref{mn} that
Theorem \ref{main2} is valid with $K_n$ replaced by the lower
bound $M_n$. Afterwards in Section \ref{nn} it is verified that
$(N_n-M_n)/n\to 0$ in probability (even in $L^1$), which completes
the proof of Theorem \ref{main2} and in addition shows that
Theorem \ref{main2} remains valid with $K_n$ replaced by $N_n$.
Note that, if $K_{n,1}$ denotes the number of types which appear
exactly once in the sample of size $n$, then $M_n\le K_{n,1}\le
K_n$, and, consequently, Theorem \ref{main2} remains also valid
with $K_n$ replaced by $K_{n,1}$.

Theorem \ref{main2} leaves open the question about the asymptotical
behavior of $K_n$ for the important class of $\Xi$-coalescents which
do not satisfy condition (\ref{cond}). As mentioned before, some results for particular
$\Lambda$-coalescents are known (\cite{basdevantgoldschmidt},
\cite{berestyckischweinsberg1}, \cite{berestyckischweinsberg2}, \cite{ewens},
\cite{moehleewens}), however,
the problem concerning the asymptotical behavior of $K_n$ for the full class
of $\Xi$-coalescents remains open.

%
\subsection{A recursion for the number of types}\label{recursion}
\setcounter{theorem}{0}

The proof of Theorem \ref{main1} is based on two fundamental properties
of coalescent processes which Kingman \cite{kingman1} calls {\em natural
coupling} and {\em temporal coupling}.

{\em Natural coupling} states
the following. Suppose a genealogy of a sample of size $n\in\nz$ governed
by a $\Xi$-coalescent is given. If a sub-sample of size $m\in\{1,\ldots,n-1\}$
of this sample is taken, i.e., if $n-m$ individuals are removed
from the sample, then the genealogical tree of the remaining sample
of size $m$ is governed by the same $\Xi$-coalescent. This consistency
relation between different sample sizes is one of the fundamental properties
of exchangeable coalescents. It is in fact needed in order to prove the
existence of exchangeable coalescent processes with state space ${\cal E}$
via Kolmogoroff's extension theorem.

The second property, called {\em temporal coupling} states the following.
Consider a restricted coalescent process $(R_t^{(n)})_{t\ge 0}:=(\varrho_nR_t)_{t\ge 0}$
and let $T_n:=\inf\{t>0:R_t^{(n)}\neq R_0^{(n)}\}$ denote the time of its first
jump. If you identify individuals which belong after that first jump
to the same equivalence class, then the process started at time $T_n$ is
distributed as a coalescent with sample size $|R_{T_n}^{(n)}|$. Mathematically
this property essentially boils down to the strong Markov property.

We will now verify Theorem \ref{main1}.

\vspace{2mm}

{\bf Proof of Theorem \ref{main1}}.
   The recursion (\ref{knrec}) is equivalent to $P(K_1=1)=1$ and
   \begin{equation} \label{distrec}
      P(K_n=k)\ =\ \frac{nr}{g_n+nr}P(K_{n-1}=k-1) + \frac{g_n}{g_n+nr}\sum_{i=k}^{n-1} r_{ni}P(K_i=k)
   \end{equation}
   for $n\in\{2,3,\ldots\}$ and $k\in\{1,\ldots,n\}$. We verify
   (\ref{distrec}) in analogy to the proofs presented in \cite{moehleewens}
   by looking at the first event (either a coalescence or a mutation)
   which happens backwards in time.

   The time $W_n$ back to the first mutation is exponentially distributed
   with parameter $nr$. The time $T_n$ back to the first coalescence is
   independent of $W_n$ and exponentially distributed with parameter $g_n$.
   Thus, the first event backwards in time is a mutation with probability
   $P(W_n<T_n)=nr/(g_n+nr)$, and a coalescence with the complementary probability
   $P(T_n<W_n)=g_n/(g_n+nr)$. Note that these two probabilities appear on the
   right hand side in (\ref{distrec}).

   Assume that the first event backwards in time is a
   mutation. If we disregard the individual which is affected by this
   mutation, the number of types decreases by one. Moreover, from
   the natural coupling property it follows that the remaining tree
   is distributed as a coalescent restricted to the set $\{1,\ldots,n-1\}$.
   This argument explains the appearance of the probability $P(K_{n-1}=k-1)$
   on the right hand side in (\ref{distrec}).

   If the first event backwards in time is a coalescence, then at the time
   of that coalescence event, the coalescent process jumps to a partition
   with $i$ blocks, $i\in\{1,\ldots,n-1\}$, with
   probability $r_{ni}=g_{ni}/g_n$. By the temporal coupling property, the
   coalescent process stopped at that time is distributed as a coalescent
   restricted to the set $\{1,\ldots,i\}$. As the number of types is
   not affected by a coalescence, the appearance of the sum on the right
   hand side in (\ref{distrec}) is explained. Note that it suffices to
   run the sum from $k$ to $n-1$ as $P(K_i=k)=0$ for $i<k$.\hfill$\Box$

\begin{remarks}
   1. In terms of the generating function $f_n(s):={\rm E}(s^{K_n})$, $n\in\nz$,
   $s\in\cz$, the recursion (\ref{knrec}) (or (\ref{distrec})) is equivalent
   to $f_1(s)=s$ and
   \begin{equation} \label{pgfrec}
      (g_n+nr)f_n(s)\ =\ nrsf_{n-1}(s) + \sum_{k=1}^{n-1} g_{nk} f_k(s),
      \quad n\in\{2,3,\ldots\}, s\in\cz,
   \end{equation}
   a formula which follows (at least for coalescent processes with
   multiple collisions) also by taking $s_1=\cdots=s_n=:s$ in
   Eq.~(4) of \cite{moehleoberwolf}.

   2. The recursion (\ref{distrec}) for the distribution of $K_n$ is useful
   to compute the probabilities $P(K_n=k)$ successively for $k=n,n-1,\ldots,1$.
   For example, for $k=n$ it follows that
   $(g_n+nr)P(K_n=n)=nrP(K_{n-1}=n-1)$ and, therefore,
   $$
   P(K_n=n)\ =\ \prod_{i=2}^n \frac{ir}{g_i+ir}
   \ =\ \frac{r^{n-1}n!}{\prod_{i=2}^n(g_i+ir)},\quad n\in\nz.
   $$
   Note that $P(K_n=n)$ is the probability to have only singletons
   in the sample of size $n$.
\end{remarks}
\begin{example} (Kingman coalescent)
   For the Kingman coalescent ($\Lambda=\delta_0$) we have $I_n\equiv n-1$,
   $g_n=g_{n,n-1}=n(n-1)/2$ and $g_{ni}=0$ for $i\in\{1,\ldots,n-2\}$.
   The recursion (\ref{knrec}) reduces to
   $K_n\stackrel{d}{=}B_n+K_{n-1}$. Therefore,
   $K_n\stackrel{d}{=}\sum_{i=1}^n B_i$, $n\in\nz$, where $B_1,B_2,\ldots$
   are independent Bernoulli variables with
   $P(B_n=1)=nr/(g_n+nr)=\theta/(\theta+n-1)$, $n\in\nz$,
   with $\theta:=2r$. It follows easily that
   $P(K_n=k)=\theta^ks(n,k)/[\theta]_n$, where
   $[\theta]_n:=\theta(\theta+1)\cdots(\theta+n-1)$ and
   the $s(n,k)$ denote
   the absolute Stirling numbers of the first kind.
   Moreover, ${\rm E}(K_n)=\theta\sum_{i=0}^{n-1}1/(\theta+i)\sim
   \theta\log n$ and ${\rm Var}(K_n)=\theta\sum_{i=1}^{n-1}i/(\theta+i)^2
   \sim\theta\log n$. By the Lindeberg-Feller central limit theorem,
   $(K_n-\theta\log n)/\sqrt{\theta\log n}$
   is asymptotically standard normal distributed. All these
   results are of course well known and go at least back to
   the seminal work of Ewens \cite{ewens}.
\end{example}
\begin{example} (Star-shaped coalescent)
   For the star-shaped coalescent ($\Lambda=\delta_1$) we have
   $I_n\equiv 1$,
   $g_{n1}=g_n=1$ and $g_{ni}=0$ for $i\in\{2,\ldots,n-1\}$. Therefore,
   (\ref{pgfrec}) reduces to $(1+nr)f_n(s)=nrsf_{n-1}(s)+s$,
   $n\in\{2,3,\ldots\}$, $s\in\cz$.
   We refer to \cite[Section 4]{moehleewens} for more details. In particular,
   in \cite{moehleewens} it is shown that $K_n/n$ converges almost surely
   to a limiting random variable $K$, beta distributed with parameter
   $1$ and $1/r$, that is $P(K>x)=(1-x)^{1/r}$, $0<x<1$.
\end{example}
\begin{remark} (Recursion for the factorial moments of $K_n$)
Taking the $j$th derivative with respect to $s$ in (\ref{pgfrec}) and
applying the Leibniz rule yields
$$
(g_n+nr)f_n^{(j)}(s)\ =\ nr\big(sf_{n-1}^{(j)}(s)+jf_{n-1}^{(j-1)}(s)\big)
+ \sum_{k=1}^{n-1} g_{nk} f_k^{(j)}(s)
$$
for $n\in\{2,3,\ldots\}$, $j\in\nz$ and $s\in\cz$.
For $n\in\nz$ and $j\in\nz_0$
let $\mu_n^{(j)}:={\rm E}((K_n)_j)={\rm E}(K_n(K_n-1)\cdots(K_n-j+1))$ denote
the $j$th descending factorial moment of $K_n$. Taking the limit $s\to 1$
it follows that
$$
(g_n+nr)\mu_n^{(j)}
\ =\ nr\big(\mu_{n-1}^{(j)}+j\mu_{n-1}^{(j-1)}\big) + \sum_{k=1}^{n-1}g_{nk}\mu_k^{(j)},
\quad n\in\{2,3,\ldots\}, j\in\nz.
$$
This recursion with initial condition $\mu_1^{(j)}=\delta_{j1}$ (Kronecker symbol)
is useful to compute the factorial moments of $K_n$. For example, for
$j=n$ we have $(g_n+nr)\mu_n^{(n)}=n^2r\mu_{n-1}^{(n-1)}$ and, therefore,
$$
\mu_n^{(n)}\ =\ \prod_{i=2}^n \frac{i^2r}{g_i+ir}
\ =\ \frac{r^{n-1}(n!)^2}{\prod_{i=2}^n(g_i+ir)},\quad n\in\nz,
$$
a result which also follows from $\mu_n^{(n)}=n!P(K_n=n)$. In particular,
the first moment $\mu_n:=\mu_n^{(1)}={\rm E}(K_n)$ follows the recursion
$\mu_1=1$ and
\begin{equation} \label{meanrec}
   (g_n+nr)\mu_n\ =\ nr(\mu_{n-1}+1) + \sum_{k=1}^{n-1} g_{nk}\mu_k,
   \qquad n\in\{2,3,\ldots\}.
\end{equation}
\end{remark}
It seems to be non-trivial to solve any of these recursions except for
the Kingman coalescent ($\Lambda=\delta_0$) and the star-shaped coalescent
($\Lambda=\delta_1$). We therefore focus on asymptotic results for $K_n$
as the sample size $n$ tends to infinity.
\subsection{The number of mutated external branches} \label{mn}
\setcounter{theorem}{0}
We say that a branch of the restricted coalescent tree
$(\varrho_nR_t)_{t\ge 0}$ is mutated, if it is affected by at least
one mutation. In this section we study the asymptotics of the
number $M_n$ of mutated external branches of $(\varrho_nR_t)_{t\ge 0}$
under the assumption that the measure $\Xi$ satisfies the condition
(\ref{cond}).
\begin{lemma} \label{mnconv}
   Suppose that the characterizing measure $\Xi$ of the exchangeable
   coalescent process $R=(R_t)_{t\ge 0}$ satisfies (\ref{cond}). Then,
   $M_n/n\stackrel{d}{\to}M$ as $n\to\infty$, where $M$ is a random
   variable uniquely determined by its moments
   $$
   {\rm E}(M^k)\ =\ {\rm E}\Big(\prod_{i=1}^k (1-e^{-rL_i})\Big),
   \quad k\in\nz,
   $$
   with $L_i:=\sup\{t>0:\mbox{$\{i\}$ is a block of $R_t$}\}$, $i\in\nz$.
\end{lemma}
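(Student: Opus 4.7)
The plan is to exploit the \emph{natural coupling} of $\Xi$-coalescents, combine it with the conditional independence of mutations on disjoint branches, and then apply the method of moments to $M_n/n$.

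First, I would use natural coupling to identify the external branch lengths. For any $i\in\{1,\ldots,n\}$, the set $\{i\}$ is a block of $\varrho_n R_t$ if and only if it is a block of $R_t$, so the length of the $i$-th external branch of $(\varrho_n R_t)_{t\ge 0}$ equals $L_i$ and does not depend on $n$ (as long as $n\ge i$). Let $B_i$ be the indicator that this external branch carries at least one mutation. Since mutations along distinct branches are independent Poisson processes of rate $r$, conditional on the entire sequence $(L_j)_{j\ge 1}$ the variables $B_1,B_2,\ldots$ are independent Bernoulli with ${\rm E}(B_i\mid L_i)=1-e^{-rL_i}$, and $M_n=\sum_{i=1}^n B_i$. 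Because $R$ is exchangeable, $(L_i)_{i\ge 1}$, and hence $(B_i)_{i\ge 1}$, is an exchangeable sequence.

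Next, I would compute the moments of $M_n/n$ via
$$
   {\rm E}\Big(\Big(\frac{M_n}{n}\Big)^k\Big)
   \ =\ \frac{1}{n^k}\sum_{i_1,\ldots,i_k=1}^n {\rm E}(B_{i_1}\cdots B_{i_k}),
   \qquad k\in\nz.
$$
The $n(n-1)\cdots(n-k+1)$ terms with pairwise distinct indices give, by exchangeability and then conditioning on $(L_j)_{j\ge 1}$,
$$
   \frac{n(n-1)\cdots(n-k+1)}{n^k}\,{\rm E}(B_1\cdots B_k)
   \ =\ \frac{n(n-1)\cdots(n-k+1)}{n^k}\,{\rm E}\Big(\prod_{i=1}^k(1-e^{-rL_i})\Big),
$$
which tends to ${\rm E}(\prod_{i=1}^k(1-e^{-rL_i}))$ as $n\to\infty$. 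The remaining $O(n^{k-1})$ summands (those with at least one coincidence among the indices) each lie in $[0,1]$ and therefore contribute at most $O(1/n)$.

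Finally, since $M_n/n$ takes values in $[0,1]$, the limiting moments $(m_k)_{k\in\nz}$ with $m_k\le 1$ uniquely determine a probability law on $[0,1]$ via the Hausdorff moment problem, and convergence of all moments on a compact interval implies weak convergence $M_n/n\stackrel{d}{\to}M$. The main (modest) obstacle is to verify carefully that the off-diagonal remainder really is negligible and that $L_i<\infty$ almost surely (which holds because in any non-trivial $\Xi$-coalescent every lineage is eventually merged). Condition~(\ref{cond}) is not strictly needed for this lemma itself; it is the standing hypothesis under which, combined with Section~\ref{nn}, the limit $M$ will feed into the proof of Theorem~\ref{main2}.
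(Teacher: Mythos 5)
Your overall strategy (method of moments for the $[0,1]$-valued variables $M_n/n$, conditional independence of the mutation indicators given the branch lengths, exchangeability to reduce to distinct indices, and the Hausdorff moment problem) is exactly the one used in the paper, but there is a genuine error at the very first step. You claim that $\{i\}$ is a block of $\varrho_nR_t$ if and only if it is a block of $R_t$, and conclude that the $i$th external branch length of the restricted tree equals $L_i$ and does not depend on $n$. Only the ``if'' direction is true: $\{i\}$ being a singleton of $\varrho_nR_t$ requires merely that $i$ has not merged with any of $1,\ldots,n$, while it may already have merged with individuals outside the sample, in which case $\{i\}$ is not a block of $R_t$. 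Hence $L_{n,i}:=\sup\{t>0:\{i\}\mbox{ is a block of }\varrho_nR_t\}$ satisfies $L_{n,i}\ge L_i$, and the inequality is strict with positive probability even under (\ref{cond}) (individual $i$ may first coalesce with an individual whose label exceeds $n$); in the extreme case of the Kingman coalescent one even has $L_i=0$ a.s.\ while $L_{n,i}>0$ a.s. Consequently your computation yields ${\rm E}(B_{1}\cdots B_{k})={\rm E}\big(\prod_{i=1}^k(1-e^{-rL_{n,i}})\big)$, not ${\rm E}\big(\prod_{i=1}^k(1-e^{-rL_{i}})\big)$, and an additional limiting argument is needed.

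The missing step is precisely the first part of the paper's proof: for fixed $t_1,\ldots,t_k$ the events $\{\mbox{$\{i\}$ is a block of $\varrho_nR_{t_i}$ for $i=1,\ldots,k$}\}$ decrease in $n$ to $\{\mbox{$\{i\}$ is a block of $R_{t_i}$ for $i=1,\ldots,k$}\}$, so that $(L_{n,1},\ldots,L_{n,k})\stackrel{d}{\to}(L_1,\ldots,L_k)$ as $n\to\infty$, and since $(t_1,\ldots,t_k)\mapsto\prod_{i=1}^k(1-e^{-rt_i})$ is bounded and continuous this gives ${\rm E}\big(\prod_{i=1}^k(1-e^{-rL_{n,i}})\big)\to{\rm E}\big(\prod_{i=1}^k(1-e^{-rL_{i}})\big)$. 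With this inserted, the remainder of your argument (the $O(n^{k-1})$ off-diagonal terms being negligible after division by $n^k$, and the Hausdorff moment argument on $[0,1]$) is correct and coincides with the paper's; your side remark that (\ref{cond}) is not invoked in the proof of the lemma itself is also consistent with the paper, where (\ref{cond}) serves to make the limit nondegenerate and to identify it later.
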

\begin{proof}
   For $n\in\nz$ and $i\in\{1,\ldots,n\}$ let
   $$
   L_{n,i}\ :=\
   \sup\{t>0:\mbox{$\{i\}$ is a block of $\varrho_nR_t$}\}
   $$
   denote the length of the $i$th external branch of the restricted coalescent
   tree $(\varrho_nR_t)_{t\ge 0}$. Fix $k\in\nz$ and
   $t_1,\ldots,t_k\in [0,\infty)$. For $n\ge k$ we have
   \begin{eqnarray*}
      &   & \hspace{-15mm}P(L_{n,1}>t_1,\ldots,L_{n,k}>t_k)\\
      & = & P(\mbox{$\{1\}$ is a block of $\varrho_nR_{t_1}$},\ldots,
            \mbox{$\{k\}$ is a block of $\varrho_nR_{t_k}$})\\
      & \to & P(\bigcap_{n\in\nz}\{
            \mbox{$\{1\}$ is a block of $\varrho_nR_{t_1}$},\ldots,
            \mbox{$\{k\}$ is a block of $\varrho_nR_{t_k}$}\})\\
      & = & P(\mbox{$\{1\}$ is a block of $R_{t_1}$},\ldots,
            \mbox{$\{k\}$ is a block of $R_{t_k}$})\\
      & = & P(L_1>t_1,\ldots,L_k>t_k).
   \end{eqnarray*}
   Thus, for all $k\in\nz$, $(L_{n,1},\ldots,L_{n,k})\stackrel{d}{\to}(L_1,\ldots,L_k)$
   as $n\to\infty$.

   For $n\in\nz$ and $i\in\{1,\ldots,n\}$ let $E_{n,i}$ denote the event
   that the $i$th external branch of the restricted tree $(\varrho_nR_t)_{t\ge 0}$
   is affected by at least one mutation. Conditional on the lengths
   $L_{n,1},\ldots,L_{n,n}$ of
   the external branches, the mutation Poisson process with parameter
   $r>0$ acts independently on all these branches.
   Thus, for fixed $j\in\nz$ we have
   \begin{eqnarray*}
      P(E_{n,1}\cap\cdots\cap E_{n,j})
      & = & E(P(E_{n,1}\cap\cdots\cap E_{n,j}|L_{n,1},\ldots,L_{n,j}))\\
      & = & E(P(E_{n,1}|L_{n,1})\cdots P(E_{n,j}|L_{n,j}))\\
      & = & E((1-e^{-rL_{n,1}})\cdots(1-e^{-rL_{n,j}}))\\
      & \to & E((1-e^{-rL_1})\cdots(1-e^{-rL_j})).
   \end{eqnarray*}
   From $M_n=\sum_{i=1}^n 1_{E_{n,i}}$ it follows that
   \begin{eqnarray*}
      {\rm E}(M_n^k)
      & = & {\rm E}\Big(\Big(\sum_{i=1}^n 1_{E_{n,i}}\Big)^k\Big)
      \ = \ \sum_{i_1,\ldots,i_k=1}^n {\rm E}(1_{E_{n,i_1}}\cdots 1_{E_{n,i_k}})\\
      & = & \sum_{i_1,\ldots,i_k\in\{1,\ldots,n\}}
            {\rm E}(1_{E_{n,i_1}}\cdots 1_{E_{n,i_k}}).
   \end{eqnarray*}
   For each fixed $n$, the events $E_{n,i}$, $i\in\{1,\ldots,n\}$, are
   exchangeable. Therefore,
   $$
   {\rm E}(M_n^k)
   \ =\ \sum_{j=1}^k S(k,j)(n)_j P(E_{n,1}\cap\cdots\cap E_{n,j}),
   $$
   where $S(k,j)$ denotes the Stirling number of the second kind, i.e.
   the number of ways to partition a set with $k$ elements in
   $j$ non-empty subsets. Division by $n^k$ and taking the limit
   $n\to\infty$ yields for all $k\in\nz_0$
   \begin{eqnarray*}
      \lim_{n\to\infty}{\rm E}\Big(\Big(\frac{M_n}{n}\Big)^k\Big)
      & = & \sum_{j=1}^k S(k,j)\lim_{n\to\infty}\frac{(n)_j}{n^k}
            P(E_{n,1}\cap\cdots\cap E_{n,j})\\
      & = & \lim_{n\to\infty} P(E_{n,1}\cap\cdots\cap E_{n,k})\\
      & = & E((1-e^{-rL_1})\cdots(1-e^{-rL_k}))\ =:\ \mu_k.
   \end{eqnarray*}
   For all $m,k\in\nz_0$,
   \begin{eqnarray*}
      \sum_{j=0}^m {m\choose j}(-1)^j\mu_{k+j}
      \ =\ \lim_{n\to\infty}
           {\rm E}\left(
              \sum_{j=0}^m {m\choose j}(-1)^j\Big(\frac{M_n}{n}\Big)^{k+j}
           \right)\\
      \ =\ \lim_{n\to\infty}
           {\rm E}\left(
              \Big(\frac{M_n}{n}\Big)^k\Big(1-\frac{M_n}{n}\Big)^m
           \right)
      \ \geq\ 0.
   \end{eqnarray*}
   Thus (Hausdorff moment problem), the sequence
   $(\mu_k)_{k\in\nz_0}$ is a moment sequence of some random variable
   $M$ taking values in the unit interval $[0,1]$. The convergence of
   moments implies the convergence $M_n/n\stackrel{d}{\to} M$.\hfill$\Box$
\end{proof}
\begin{remark}
   There is the following interpretation of the distribution of the
   limiting external branch lengths
   $L_i$, $i\in\nz$, in terms of the frequency spectrum of the coalescent.
   Let $S_t$ denote the frequency of
   singletons of $R_t$. Conditional on $S_{t_1},\ldots,S_{t_k}$, the
   probability that $i$ is still a singleton at time $t_i$,
   $i\in\{1,\ldots,k\}$, is
   $S_{t_1}\cdots S_{t_k}$. Therefore,
   for $t_1,\ldots,t_k\in [0,\infty)$,
   $$
   P(L_1>t_1,\ldots,L_k>t_k)\ =\ {\rm E}(S_{t_1}\cdots S_{t_k}),
   $$
   or, equivalently (in agreement with the principle of inclusion and exclusion),
   $$
   P(L_1\le t_1,\ldots,L_k\le t_k)
   \ =\ {\rm E}((1-S_{t_1})\cdots(1-S_{t_k})).
   $$
   Thus, the distribution function of $(L_1,\ldots,L_k)$ can be expressed
   in terms of the process $S=(S_t)_{t\ge 0}$.
\end{remark}
The following Corollary \ref{char} expresses the distribution
of the limiting random variable $M$ appearing in Lemma \ref{mnconv}
in terms of the process $(S_t)_{t\ge 0}$. There is the following rough
intuition for the form of the integral in Corollary \ref{char}. A
contribution to $M_n$ occurs every time a lineage that has not yet
coalesced experiences its first mutation.
The time of a first mutation is exponentially distributed with parameter $r$,
so at each time $t$ the infinitesimal growth of $M_n$ due to a not yet
coalesced lineage is $re^{-rt}$. Since $S_t$ is the fraction of singletons
at time $t$, the infinitesimal growth of $M_n$ at time $t$ is approximately
$re^{-rt}nS_t$. 
In \cite{moehlesites}, when the number of
segregating sites is the quantity of interest, any mutation contributes
to the count rather than just the first one, so we get $r$ in Proposition
5.1 of \cite{moehlesites} in place of the $re^{-rt}$ in Corollary \ref{char}.
\begin{corollary} \label{char}
   The limiting variable $M$ appearing in Lemma \ref{mnconv} satisfies
   $$
   M\ \stackrel{d}{=}\ r\int_0^\infty e^{-rt}S_t\,dt.
   $$
\end{corollary}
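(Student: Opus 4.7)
The plan is to show that both $M$ and $r\int_0^\infty e^{-rt} S_t \, dt$ take values in $[0,1]$ and have identical moment sequences; distributional equality then follows from the Hausdorff moment problem, exactly as at the end of the proof of Lemma \ref{mnconv}.

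First I would rewrite each factor of the product defining $E(M^k)$ as an integral via the identity
$$
1-e^{-rL_i}\ =\ \int_0^\infty re^{-rt_i}\mathbf{1}_{\{t_i<L_i\}}\,dt_i.
$$
Expanding the product, applying Fubini (the integrand is nonnegative), and then using the identity
$$
P(L_1>t_1,\ldots,L_k>t_k)\ =\ {\rm E}(S_{t_1}\cdots S_{t_k})
$$
noted in the preceding remark, yields
$$
{\rm E}(M^k)\ =\ r^k\int_0^\infty\!\!\cdots\!\int_0^\infty e^{-r(t_1+\cdots+t_k)}\,{\rm E}(S_{t_1}\cdots S_{t_k})\,dt_1\cdots dt_k.
$$

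On the other side, expanding the $k$-th power of the integral and using Fubini gives
$$
{\rm E}\Big(\Big(r\int_0^\infty e^{-rt}S_t\,dt\Big)^k\Big)\ =\ r^k\int_0^\infty\!\!\cdots\!\int_0^\infty e^{-r(t_1+\cdots+t_k)}{\rm E}(S_{t_1}\cdots S_{t_k})\,dt_1\cdots dt_k,
$$
which is the same expression. Since $0\le S_t\le 1$ implies $0\le r\int_0^\infty e^{-rt}S_t\,dt\le 1$ almost surely, and since $M\in[0,1]$ by Lemma \ref{mnconv}, the common moment sequence uniquely determines the distribution on $[0,1]$, so $M\stackrel{d}{=}r\int_0^\infty e^{-rt}S_t\,dt$.

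The only subtle point is the justification of Fubini and the passage from the joint distribution of the first $k$ external-branch-length limits $(L_1,\ldots,L_k)$ to the process $S=(S_t)_{t\ge 0}$; but both are already handled in the preceding remark, so the argument reduces to a clean moment computation rather than any new probabilistic input.
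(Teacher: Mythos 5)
Your argument is correct and is essentially the paper's own proof: the identity $1-e^{-rL_i}=\int_0^\infty re^{-rt_i}\mathbf{1}_{\{t_i<L_i\}}\,dt_i$, expanded over the product, is exactly what the paper encodes via its multivariate difference operator $\Delta_0^Lg$ with $h(t)=r^ke^{-r(t_1+\cdots+t_k)}$, followed by the same Fubini step, the same use of $P(L_1>t_1,\ldots,L_k>t_k)={\rm E}(S_{t_1}\cdots S_{t_k})$, and the same conclusion from the Hausdorff moment problem on $[0,1]$. Your factor-by-factor formulation is a slightly more elementary presentation of the identical computation.
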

\begin{proof}
   Fix $k\in\nz$ and define $g:\rz^k\to\rz$ via $g(t):=(-1)^ke^{-r(t_1+\cdots+t_k)}$
   for $t=(t_1,\ldots,t_k)\in\rz^k$. Note that
   $h(t):=\frac{\partial^k}{\partial t_1\cdots\partial t_k}g(t)
   =r^ke^{-r(t_1+\cdots+t_k)}$.
   For $x=(x_1,\ldots,x_k),y=(y_1,\ldots,y_k)\in\rz^k$ with
   $x_i\le y_i$ for all $1\le i\le k$ define
   $$
   \Delta_x^yg\ :=\
   \sum_{\varepsilon_1,\ldots,\varepsilon_k\in\{0,1\}}
   (-1)^{\varepsilon_1+\cdots+\varepsilon_k}g(\varepsilon_1x_1+(1-\varepsilon_1)y_1,
   \ldots,\varepsilon_kx_k+(1-\varepsilon_k)y_k).
   $$
   With the notation $L:=(L_1,\ldots,L_k)$ we have
   \begin{eqnarray*}
      {\rm E}(M^k)
      & = & {\rm E}((1-e^{-rL_1})\cdots(1-e^{-rL_k}))
      \ = \ {\rm E}(\Delta_0^Lg)\\
      & = & \int_{\rz_+^k} \Delta_0^y g\,P_L(dy)
      \ = \ \int_{\rz_+^k} \int_{\rz_+^k} 1_{[0,y)}(t)h(t)
            \,\lambda^k(dt)\,P_L(dy).
   \end{eqnarray*}
   An application of Fubini's theorem yields
   \begin{eqnarray*}
      {\rm E}(M^k)
      & = & \int_{\rz_+^k} h(t)\int_{\rz_+^k} 1_{(t,\infty)}(y) P_L(dy)\,\lambda^k(dt)
      \ = \ \int_{\rz_+^k} h(t) P(L>t)\,\lambda^k(dt)\\
      & = & \int_{\rz_+^k} r^ke^{-r(t_1+\cdots+t_k)}
            {\rm E}(S_{t_1}\cdots S_{t_k})\,\lambda^k(dt_1,\ldots,dt_k)\\
      & = & {\rm E}\left(\left(
               \int_0^\infty re^{-rt}S_t\,dt
            \right)^k\right).
   \end{eqnarray*}
   Thus, the moments of the random variables $M$ and
   $\int_0^\infty re^{-rt}S_t\,dt$ coincide.
   As both random variables take almost surely
   values in the unit interval $[0,1]$, they are equal in
   distribution.\hfill$\Box$
\end{proof}
The moments of $M$ can be expressed in terms of the measure $\Xi$ as follows.
\begin{remark}
   Assume that the measure $\Xi$ of the exchangeable coalescent
   $(R_t)_{t\ge 0}$ satisfies (\ref{cond}). From the Poisson
   construction of the $\Xi$-coalescent (see Schweinsberg \cite{schweinsberg2})
   it follows that the process $X=(X_t)_{t\ge 0}$, defined via
   $X_t:=-\log S_t$ for $t\ge 0$, is a drift-free subordinator with Laplace
   exponent
   $$
   \Phi(\eta)\ =\ \int_{\Delta\setminus\{0\}}\frac{1-(1-|x|)^\eta}{(x,x)}\,\Xi(dx),
   \qquad\eta\ge 0.
   $$
   Note that, for $\eta\in\nz$, $e^{-t\Phi(\eta)}
   ={\rm E}(e^{-\eta X_t})={\rm E}(S_t^\eta)$
   is the probability that $\{1\},\ldots,\{\eta\}$ are (singleton) blocks
   of $R_t$. The L\'evy measure $\varrho$ on $(0,\infty]$ of the subordinator
   $X$ is hence the image of the measure $\nu(dx):=\Xi(dx)/(x,x)$
   via the transformation $T(x):=-\log(1-|x|)$, i.e. $\varrho(A)=
   \int_{T^{-1}(A)}(x,x)^{-1}\Xi(dx)$ for all Borel subsets $A$ of
   $(0,\infty]$. This result is in agreement with Proposition~26
   of Pitman \cite{pitman} for the special situation when the coalescent
   allows only for multiple collisions ($\Lambda$-coalescent). From
   $$
   \int_{\Delta\setminus\{0\}}\frac{|x|}{(x,x)}\Xi(dx)
   \ =\ \int_{\Delta\setminus\{0\}}|x|\nu(dx)
   \ =\ \int_{(0,\infty]}(1-e^{-y})\,\varrho(dy)
   $$
   and $(1-e^{-1})\min(y,1)\le 1-e^{-y}\le \min(y,1)$, $y\ge 0$, it follows
   that (\ref{cond}) is equivalent to
   \begin{equation}
      \varrho(\{0\})\ =\ 0
      \quad\mbox{and}\quad
      \int_{(0,\infty]} \min(y,1)\,\varrho(dy)\ <\ \infty.
   \end{equation}
   Note that the finiteness of the last integral is
   the typical condition for a measure $\varrho$ for being a
   L\'evy measure of some subordinator.
   From Proposition 3.1 of \cite{carmonapetityor} it follows that
   $M\stackrel{d}{=}r\int_0^\infty e^{-rt-X_t}dt$ has moments
   \begin{equation} \label{moments}
      {\rm E}(M^k)\ =\ \frac{r^kk!}{(r+\Phi(1))(2r+\Phi(2))\cdots(kr+\Phi(k))},
      \quad k\in\nz.
   \end{equation}
   In particular,
   \begin{eqnarray}
      {\rm Var}(M)
      & = & {\rm E}(M^2)-({\rm E}(M))^2\nonumber\\
      & = & \frac{2r^2}{(r+\Phi(1))(2r+\Phi(2))}- \frac{r^2}{(r+\Phi(1))^2}\nonumber\\
      & = & \frac{2r^2(r+\Phi(1))-r^2(2r+\Phi(2))}{(r+\Phi(1))^2(2r+\Phi(2))}\nonumber\\
      & = & \frac{r^2}{(r+\Phi(1))^2(2r+\Phi(2))}
            \int_{\Delta\setminus\{0\}}\frac{|x|^2}{(x,x)}\,\Xi(dx),
            \label{variance}
   \end{eqnarray}
   as $2\Phi(1)-\Phi(2)=\int_{\Delta\setminus\{0\}}|x|^2/(x,x)\Xi(dx)$.
\end{remark}
In the final remark of this section a distributional fixed-point
equation for $M$ is derived for $\Xi$-coalescents satisfying
\begin{equation} \label{cond3}
   \Xi(\{0\})\ =\ 0\quad\mbox{and}\quad
   \int_{\Delta\setminus\{0\}}\frac{\Xi(dx)}{(x,x)}\ <\ \infty.
\end{equation}
In the spirit of Bertoin and Le~Gall \cite{bertoinlegall} we call
measures $\Xi$ satisfying (\ref{cond3}) simple measures.
Note that (\ref{cond3}) implies (\ref{cond}).
\begin{remark}
   If (\ref{cond3}) holds, then the L\'evy measure $\varrho$ of the
   subordinator $X=(X_t)_{t\ge 0}$ is finite ($m_0:=\varrho((0,\infty])
   =\nu(\Delta\setminus\{0\})<\infty$),
   which means that $X$ is a compound Poisson process
   $X_t=\sum_{i=1}^{N_t}\eta_i$, where $N:=(N_t)_{t\ge 0}$
   is a homogeneous Poisson process with parameter $m_0$ and
   $\eta_i$, $i\in\nz$, are random variables, independent of each
   other and of $N$, with common distribution function
   $y\mapsto P(\eta_i\leq y)=m_0^{-1}\varrho((0,y])$. Let
   $T_1<T_2<T_3<\cdots$ denote the jump times of the Poisson process $N$.
   Note that $T_{i+1}-T_i$ is exponentially distributed with parameter $m_0$.
   We have
   \begin{eqnarray*}
      M
      & \stackrel{d}{=} & \int_0^\infty re^{-rt}S_t\,dt
      \ =\ \sum_{i=0}^\infty \int_{T_i}^{T_{i+1}} re^{-rt} S_t\,dt\\
      & = & \int_0^{T_1} re^{-rt}\,dt + e^{-\eta_1}\int_{T_1}^{T_2} re^{-rt}\,dt
            + e^{-\eta_1-\eta_2}\int_{T_2}^{T_3} re^{-rt}\,dt + \cdots \\
      & = & (1-e^{-rT_1}) + e^{-\eta_1}(e^{-rT_1}-e^{-rT_2}) + e^{-\eta_1-\eta_2}
            (e^{-rT_2}-e^{-rT_3}) + \cdots\\
      & = & (1-e^{-rT_1}) + e^{-\eta_1}e^{-rT_1}\cdot\\
      &   & \hspace{1cm}\cdot
            \Big(
               (1-e^{-r(T_2-T_1)}) + e^{-\eta_2}(e^{-r(T_2-T_1)}-e^{-r(T_3-T_1)})
               + \cdots
            \Big)\\
      & = & B + A(1-B)M_1,
   \end{eqnarray*}
   with $A:=e^{-\eta_1}$, $B:=1-e^{-rT_1}$ and $M_1\stackrel{d}{=}M$. Thus,
   $M$ satisfies the distributional fixed-point equation
   \begin{equation} \label{fixedpoint}
      M\ \stackrel{d}{=}\ B + A(1-B)M,
   \end{equation}
   where $A$ and $B$ are independent (and independent of $M$),
   $B$ is beta distributed
   with parameters $1$ and $m_0/r$, i.e., $P(B>x)=(1-x)^{m_0/r}$, $x\in (0,1)$,
   and the distribution of $1-A$ is the image of the measure $\nu_0:=\nu/m_0$
   under the transformation $|.|:\Delta\setminus\{0\}\to (0,1]$, $x\mapsto |x|$.
   Using an argument similar to that of Vervaat \cite{vervaat} shows
   that the distribution of $M$ is uniquely determined by the fixed-point
   equation (\ref{fixedpoint}). The distribution of $M$ coincides with the
   stationary distribution of the process $(Y_n)_{n\in\nz_0}$ recursively
   defined by $Y_0:=0$ and $Y_{n+1}:=A_n(1-B_n)Y_n+B_n$, where
   $((A_n,B_n))_{n\in\nz_0}$ is a sequence of independent, identically
   distributed random variables with $(A_n,B_n)\stackrel{d}{=}(A,B)$.
   Note that
   $$
   Y_n\ =\ \sum_{i=0}^{n-1} B_{n-i-1}\prod_{j=n-i}^{n-1}A_j(1-B_j)
   \ \stackrel{d}{=}\ \sum_{i=0}^{n-1}B_i\prod_{j=0}^{i-1}A_j(1-B_j),
   \quad n\in\nz_0,
   $$
   and, hence, that $M\stackrel{d}{=}\sum_{i=0}^\infty B_i\prod_{j=0}^{i-1}A_j(1-B_j)$.
\end{remark}
\subsection{The total number of mutated branches} \label{nn}
\setcounter{theorem}{0}
%
In order to analyze the total number $N_n$ of mutated branches we need to
study $C_n$, the number of collision events that take place in the
restricted coalescent process $(\varrho_nR_t)_{t\ge 0}$ until there
is just a single block. Note that, in general, $C_n\ge X_n$, the number
of jumps. For $\Lambda$-coalescents we have $C_n=X_n$.
\begin{lemma} \label{lemma2}
   Let $R$ be a $\Xi$-coalescent. If (\ref{cond}) holds, then
   $C_n/n\to 0$ in $L^1$.
\end{lemma}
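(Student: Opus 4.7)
The goal is to show ${\rm E}(C_n)/n\to 0$, which is equivalent to $C_n/n\to 0$ in $L^1$ since $C_n\ge 0$. My plan is to use Schweinsberg's Poisson-point-process construction of the $\Xi$-coalescent: coalescence events correspond to atoms $(t,x)$ of a PPP on $[0,\infty)\times(\Delta\setminus\{0\})$ of intensity $dt\otimes\Xi(dx)/(x,x)$, and at such an atom each currently surviving block is independently painted with colour $i$ with probability $x_i$, blocks sharing a colour then merging. Writing $r_x(m)$ for the number of colour classes containing at least two of $m$ painted blocks, $m_t^{(n)}:=|\varrho_n R_t|$, and $T_n^*$ for the first time at which $m_t^{(n)}=1$, Campbell's compensation formula gives
\[
{\rm E}(C_n)\ =\ {\rm E}\int_0^{T_n^*}\lambda_{m_t^{(n)}}\,dt,\qquad \lambda_m\ :=\ \int_{\Delta\setminus\{0\}}{\rm E}[r_x(m)]\,\frac{\Xi(dx)}{(x,x)}.
\]

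The first step is the elementary estimate ${\rm E}[r_x(m)]=\sum_i P(\mbox{Bin}(m,x_i)\ge 2)\le m|x|$, yielding $\lambda_m\le Cm$ with $C:=\int|x|/(x,x)\,\Xi(dx)<\infty$ by (\ref{cond}). This yields only ${\rm E}(C_n)=O({\rm E}[\mbox{total branch length}])$, which for a coalescent not coming down from infinity is of order $n$; hence merely ${\rm E}(C_n)=O(n)$, not $o(n)$. To extract the missing $o(1)$ factor I would split $\Xi$ at a threshold $\epsilon>0$: writing $\lambda_m=\lambda_m^{<\epsilon}+\lambda_m^{\ge\epsilon}$ according to $|x|<\epsilon$ or $|x|\ge\epsilon$, the small-$|x|$ part satisfies $\lambda_m^{<\epsilon}\le m\,\delta(\epsilon)$ with $\delta(\epsilon):=\int_{|x|<\epsilon}|x|/(x,x)\,\Xi(dx)\to 0$ as $\epsilon\downarrow 0$ by (\ref{cond}); the big-$|x|$ part is governed by the measure $\Xi(dx)/(x,x)$ restricted to $\{|x|\ge\epsilon\}$, which is finite of total mass $\le C/\epsilon$, and by dominated convergence in the paintbox $\lambda_m^{\ge\epsilon}$ is bounded as $m\to\infty$.

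Assembling, ${\rm E}(C_n)/n\le \delta(\epsilon)\cdot{\rm E}\bigl[n^{-1}\int_0^{T_n^*}m_t^{(n)}\,dt\bigr]+n^{-1}\sup_m\lambda_m^{\ge\epsilon}\cdot{\rm E}(T_n^*)$. The first bracket is uniformly bounded in $n$, and crucially ${\rm E}(T_n^*)=o(n)$ (indeed ${\rm E}(T_n^*)=O(\log n)$ can be shown from the geometric decay of the block-count jump chain: under (\ref{cond}) the singleton-frequency drift forces ${\rm E}(I_m/m)\to \beta<1$ for large $m$). Letting $n\to\infty$ first and then $\epsilon\downarrow 0$ completes the proof. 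The main obstacle is justifying uniform control of $\lambda_m^{\ge\epsilon}$ as $m\to\infty$: for general $\Xi$ satisfying only (\ref{cond}) the paintbox may charge configurations with $N(x):=|\{i:x_i>0\}|=\infty$, so ${\rm E}[r_x(m)]$ can grow in $m$ even on $\{|x|\ge\epsilon\}$; this obstacle requires a careful approximation of $\Xi|_{\{|x|\ge\epsilon\}}$ by finitely supported measures combined with the Poisson structure and a monotone convergence argument.
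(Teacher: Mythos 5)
Your reduction of $C_n$ via the Poisson construction and the compensation formula is a reasonable starting point (indeed $\lambda_m=\int\sum_i P(\mathrm{Bin}(m,x_i)\ge 2)\,\Xi(dx)/(x,x)$ is exactly the quantity $g_m{\rm E}(V_m)$ computed in Lemma \ref{appendix4}), but as written the argument has two genuine gaps, and the assembly at the end does not close. First, the claim that $\lambda_m^{\ge\epsilon}$ is bounded in $m$ ``by dominated convergence in the paintbox'' is false under (\ref{cond}) alone: ${\rm E}[r_x(m)]$ increases to $N(x):=|\{i:x_i>0\}|$, and (\ref{cond}) allows, e.g., $\Xi=\delta_c$ with $|c|=1$, $(c,c)>0$ and infinitely many positive coordinates (Poisson--Dirichlet type points), for which $\lambda_m^{\ge\epsilon}\to\infty$. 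You flag this obstacle yourself, but the proposed repair by finitely supported approximation is not carried out and is not routine: truncating $x$ to its first $K$ coordinates leaves an error of order $m\int_{\{|x|\ge\epsilon\}}\sum_{i>K}x_i\,\Xi(dx)/(x,x)$, so $K$ must grow with $m$, the retained part then grows as well, and what one actually has to prove is $\lambda_m=o(m)$ --- which is the real analytic content of the lemma, not a technicality. Second, the term $n^{-1}\sup_m\lambda_m^{\ge\epsilon}\,{\rm E}(T_n^*)$ is controlled by asserting ${\rm E}(T_n^*)=O(\log n)$ from ``geometric decay of the block-count jump chain'' via ${\rm E}(I_m/m)\to\beta<1$. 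That premise fails under (\ref{cond}): for beta$(a,b)$-coalescents with $1<a<2$ condition (\ref{cond2}) holds, yet $g_n\to\infty$ and, by Corollary \ref{appendix2}, ${\rm E}(n-I_n)\sim nH(1)/g_n=o(n)$, i.e.\ ${\rm E}(I_n)/n\to 1$ and there is no geometric contraction. So neither factor of your second term is actually bounded as claimed, and the uniform boundedness of ${\rm E}\bigl[n^{-1}\int_0^{T_n^*}m_t^{(n)}dt\bigr]$ (true, since each of the at most $2n-1$ branches has mean length at most $1/\Xi(\Delta)$) is also left unproved.

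The paper's proof sidesteps both difficulties. It sets $a_n:={\rm E}(C_n)$, uses the one-step recursion $a_n=v_n+\sum_{k<n}r_{nk}a_k$ with $v_n={\rm E}(V_n)=\lambda_n/g_n$, and runs a Gnedin-style minimal-counterexample argument whose only input is the single ratio ${\rm E}(n-I_n)/{\rm E}(V_n)\to\infty$; this is established in Corollary \ref{appendix5} by comparing $g_n{\rm E}(n-I_n)=nH(1)-H(n)$ with $g_n{\rm E}(V_n)\le H(n)$ and showing $H(n)/n\to 0$ by a double dominated-convergence argument. In particular no bound on ${\rm E}(T_n^*)$, no uniform-in-$m$ control of $\lambda_m^{\ge\epsilon}$, and no bound on the total branch length are ever needed. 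If you want to salvage your route, the estimate you should aim for is precisely $\lambda_m/m\to 0$ (the argument of Corollary \ref{appendix5} gives it), after which you would still owe a proof that ${\rm E}(T_n^*)=o(n)$; the recursive/contradiction argument avoids even that.
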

\begin{proof}
   For $n\in\nz$ define $a_n:={\rm E}(C_n)$ for convenience. Note that
   the sequence $(a_n)_{n\in\nz}$ satisfies the recursion $a_1=0$ and
   $a_n=v_n+\sum_{k=1}^{n-1}r_{nk}a_k$ for $n\in\{2,3,\ldots\}$
   with $r_{nk}:=P(I_n=k)$, $n,k\in\nz$, $k<n$ and $v_n:={\rm E}(V_n)$,
   where $V_n$ denotes the number of internal branches starting at
   the time of the first jump of the restricted coalescent $(\varrho_nR_t)_{t\ge 0}$.

   We verify the convergence $C_n/n\to 0$ in $L^1$ by contradiction
   in analogy to Gnedin's proof of Proposition 3 in \cite{gnedin}.
   Note that a similar argument is used on p.~219 of
   \cite{iksanovmoehle2}. Assume that there exists $\varepsilon>0$
   such that $a_n>n\varepsilon$ for infinitely many values of $n$. Selecting
   $\varepsilon$ smaller, for any fixed $c$ we can obtain the inequality
   $a_n>\varepsilon n+c$ for infinitely many values of $n$. Let $n_c$ be
   the minimum such $n$. Then $n_c\to\infty$ as $c\to\infty$. For
   $k<n_c$ we have $a_k\leq\varepsilon k+c$ which implies
   \begin{eqnarray*}
      \varepsilon n_c+c
      & < & a_{n_c}
      \ = \ v_{n_c}+\sum_{k=1}^{n_c-1} r_{n_c,k}a_k\\
      & \leq & v_{n_c} + c + \varepsilon
   \sum_{k=1}^{n_c-1} kr_{n_c,k}\ =\ v_{n_c}+c+\varepsilon{\rm E}(I_{n_c}).
   \end{eqnarray*}
   The constant $c$ cancels and it follows that
   $\varepsilon{\rm E}(n_c-I_{n_c})<v_{n_c}$. For $c\to\infty$ we
   obtain the promised contradiction, as ${\rm E}(n-I_n)/v_n\to\infty$
   as $n\to\infty$ by Corollary \ref{appendix5} given in the appendix.
   Thus, for all $\varepsilon>0$ there exists $n_0=n_0(\varepsilon)\in\nz$
   such that $a_n/n\leq\varepsilon$ for all $n\ge n_0$.
   In other words, $a_n/n\to 0$ as $n\to\infty$.\hfill$\Box$
\end{proof}
We are now able to show that, if (\ref{cond}) holds, then the total
number $N_n$ of mutated branches and the number $K_n$ of types both
have the same asymptotic behavior as $M_n$ as $n\to\infty$.
\begin{corollary} \label{lastcor}
   Let $(R_t)_{t\ge 0}$ be a $\Xi$-coalescent with mutation rate
   $r>0$ satisfying (\ref{cond}). Then, $N_n/n\stackrel{d}{\to}M$ and
   as well $K_n/n\stackrel{d}{\to}M$, where $M$ is the random variable
   defined in Corollary \ref{char} with moments (\ref{moments}).
\end{corollary}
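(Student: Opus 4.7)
The plan is to combine the sandwich $M_n \le K_n \le N_n+1$ (already announced in the introduction) with Lemma \ref{mnconv}/Corollary \ref{char} and Lemma \ref{lemma2}, via Slutsky. The only genuinely new content is the bound relating $N_n - M_n$ to $C_n$.

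First I would establish the inequality $N_n - M_n \le C_n$. The difference $N_n - M_n$ counts the number of \emph{internal} branches of $(\varrho_n R_t)_{t\ge 0}$ that receive at least one mutation, so it is bounded by the total number $I_n$ of internal branches of the restricted coalescent tree. Each internal branch is created at, and only at, a non-trivial merger of some collision event, and by the paper's convention (``$C_n \ge X_n$'') the quantity $C_n$ counts precisely these non-trivial mergers across all jumps. Thus $I_n = C_n$ and hence $N_n - M_n \le C_n$.

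Next, Lemma \ref{lemma2} gives $C_n/n \to 0$ in $L^1$ (assuming (\ref{cond})), hence in probability. Therefore $(N_n - M_n)/n \to 0$ in probability, and since $0 \le K_n - M_n \le N_n + 1 - M_n \le C_n + 1$, also $(K_n - M_n)/n \to 0$ in probability. On the other hand, Lemma \ref{mnconv} together with Corollary \ref{char} yields $M_n/n \stackrel{d}{\to} M$.

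Finally I would conclude by Slutsky's theorem applied to the decompositions
\[
\frac{N_n}{n} \;=\; \frac{M_n}{n} + \frac{N_n-M_n}{n}, \qquad \frac{K_n}{n} \;=\; \frac{M_n}{n} + \frac{K_n-M_n}{n},
\]
giving $N_n/n \stackrel{d}{\to} M$ and $K_n/n \stackrel{d}{\to} M$. There is no real obstacle beyond the identification $I_n = C_n$; everything else is assembly of results already in hand.
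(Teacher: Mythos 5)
Your proof is correct and follows essentially the same route as the paper: the sandwich $M_n\le K_n\le N_n+1$, bounding the mutated non-external branches by $C_n$, and combining Lemma \ref{lemma2} with Lemma \ref{mnconv} via Slutsky. (Two harmless quibbles: your use of $I_n$ for the number of internal branches clashes with the paper's $I_n$, the block count after the first jump, and the number of non-external branches is actually $C_n-1$ rather than $C_n$ since the root block carries no branch --- but the upper bound you need still holds.)
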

\begin{proof}
   We have $M_n\le K_n\le N_n+1$. Thus, by Lemma \ref{mnconv}, it suffices
   to verify that $(N_n+1-M_n)/n\to 0$ in probability. We even show
   that $(N_n+1-M_n)/n\to 0$ in $L^1$. We have
   \begin{eqnarray*}
      0
      & \leq & K_n-M_n
      \ \leq \ N_n+1-M_n\\
      & = & \mbox{number of non-external mutated branches}+1\\
      & \leq & \mbox{number of non-external branches}+1
      \ = \ C_n.
   \end{eqnarray*}
   It remains to note that $C_n/n\to 0$ in $L^1$ by Lemma
   \ref{lemma2}.\hfill$\Box$
\end{proof}
Note that Corollary \ref{lastcor} in particular finishes the proof
of Theorem \ref{main2}.
\subsection{Examples} \label{examples}
In this section we apply Theorem \ref{main2} to some concrete examples.

\vspace{2mm}

{\bf Example 1.} (Dirac coalescents)
   Fix a point $c\in\Delta\setminus\{0\}$ and suppose that $\Xi=\delta_c$
   is the
   Dirac measure in $c$. Then, condition (\ref{cond}) holds, as
   $\int_{\Delta\setminus\{0\}} (|x|/(x,x))\Xi(dx)=|c|/(c,c)<\infty$. By Theorem
   \ref{main2}, all three random variables, $M_n/n$, $K_n/n$ and
   $N_n/n$ converge in distribution to $M:=r\int_0^\infty e^{-rt-X_t}dt$,
   where $X=(X_t)_{t\ge 0}$ is a subordinator with Laplace exponent
   $\Phi(\eta)=(1-(1-|c|)^\eta)/(c,c)$, $\eta\ge 0$. The L\'evy measure
   $\varrho=(1/(c,c))\delta_{-\log(1-|c|)}$ is hence the Dirac measure
   in $-\log(1-|c|)$ scaled by the factor $1/(c,c)$. We have $\Phi(1)=|c|/(c,c)$
   and $\Phi(2)=|c|(2-|c|)/(c,c)$ and, therefore, by (\ref{moments})
   and (\ref{variance}),
   \begin{equation} \label{deltamean}
      {\rm E}(M)\ =\ \frac{r}{r+\frac{|c|}{(c,c)}}
   \end{equation}
   and
   \begin{equation} \label{deltavariance}
      {\rm Var}(M)
      \ =\ \frac{r^2|c|^2/(c,c)}{(r+\frac{|c|}{(c,c)})^2(2r+\frac{|c|(2-|c|)}{(c,c)})}.
   \end{equation}
   Note that $m_0:=\int_{\Delta\setminus\{0\}}(1/(x,x))\Xi(dx)=1/(c,c)<\infty$, i.e.
   (\ref{cond3}) holds as well. Thus, by (\ref{fixedpoint}), $M$
   satisfies the distributional fixed-point equation
   $M\stackrel{d}{=}B+(1-|c|)(1-B)M$, where $B$ is a random variable
   independent of $M$ and beta-distributed with parameters $1$ and
   $m_0/r=1/((c,c)r)$. Even for this quite simple situation of Dirac coalescents,
   it does not seem to be straightforward to find simpler
   characterizations for the distribution of $M$.
%

\vspace{2mm}

{\bf Example 2.} (beta-coalescents)
   Let $\Lambda$ be beta distributed with parameters $a>1$ and $b>0$, i.e.,
   $\Lambda$ has density $u\mapsto (B(a,b))^{-1}u^{a-1}(1-u)^{b-1}$,
   $u\in (0,1)$, with
   respect to the Lebesgue measure on $(0,1)$, where $B(.,.)$ denotes the
   beta function. In this situation we have
   $$
   \int_{[0,1]}u^{-1}\,\Lambda(du)
   \ =\ \frac{B(a-1,b)}{B(a,b)}
   \ =\ \frac{a+b-1}{a-1}\ <\ \infty.
   $$
   Thus, Theorem \ref{main2} is applicable and
   all three random variables $M_n/n$, $K_n/n$
   and $N_n/n$, converge in distribution to $M:=r\int_0^\infty e^{-rt-X_t}dt$
   as $n\to\infty$, where $X=(X_t)_{t\ge 0}$ is a subordinator with Laplace
   exponent
   $$
   \Phi(\eta)
   \ =\ \frac{1}{B(a,b)}\int_0^1\frac{1-(1-u)^\eta}{u^2}u^{a-1}(1-u)^{b-1}\,du,
   \qquad\eta\ge 0.
   $$
   The expansion $1-(1-u)^\eta=\sum_{i=1}^\infty {\eta\choose i}(-1)^{i+1}u^i$
   yields
   \begin{eqnarray*}
      \Phi(\eta)
      & = & \frac{1}{B(a,b)}\sum_{i=1}^\infty {\eta\choose i}(-1)^{i+1}
            B(a+i-2,b)\\
      & = & \frac{a+b-1}{a-1}\sum_{i=1}^\infty
               {\eta\choose i}(-1)^{i+1}\prod_{j=1}^{i-1}\frac{a-2+j}{a+b-2+j},
           \qquad\eta\ge 0.
   \end{eqnarray*}
   Note that $\Phi(1)=(a+b-1)/(a-1)$ and $\Phi(2)=(a+2b-1)/(a-1)$.
   The mean and the variance of $M$ can be easily deduced from
   (\ref{moments}) and (\ref{variance}).


   From $\varrho((0,y])=\nu((0,1-e^{-y}])=\int_{(0,1-e^{-y}]} u^{-2}\Lambda(du)$
   it follows that the L\'evy measure $\varrho$ of the subordinator $X$
   has density $y\mapsto (B(a,b))^{-1}(1-e^{-y})^{a-3}(e^{-y})^b$, $y\in (0,\infty)$,
   with respect to the Lebesgue measure on $(0,\infty)$. If $a>2$, then
   $$
   m_0\ :=\ \int_{[0,1]}u^{-2}\Lambda(du)
   \ =\ \frac{(a+b-1)(a+b-2)}{(a-1)(a-2)}\ <\ \infty.
   $$
   In this case, by (\ref{fixedpoint}),
   $M$ satisfies the distributional fixed-point equation
   $M\stackrel{d}{=}B+A(1-B)M$, where $A$ and $B$ are independent (and
   independent of $M$), $1-A$ is beta distributed with parameters
   $a-2$ and $b$, and $B$ is beta distributed with parameters
   $1$ and $m_0/r$.

   For special parameter values of $a$ and $b$ the Laplace exponent $\Phi$ can be
   further simplified. For example, for the $\beta(2-\alpha,\alpha)$-coalescent
   with $0<\alpha<1$,
   $$
   \Phi(\eta)
   \ =\ \frac{1}{1-\alpha}\sum_{i=1}^\infty{\eta\choose i}{{\alpha-1}\choose{i-1}}
   \ =\ \frac{\eta\Gamma(\eta+\alpha)}{(1-\alpha)\Gamma(\alpha+1)\Gamma(\eta+1)},
   \qquad\eta\ge 0.
   $$
   Note that, if the conjecture on p.~495 of Basdevant and
   Goldschmidt \cite{basdevantgoldschmidt} is correct, then we have identified
   (in the notation of \cite{basdevantgoldschmidt}) the distribution of the
   random variable $C_1$, namely $C_1\stackrel{d}{=}M$.

\vspace{2mm}

{\bf Example 3.}
   Suppose that the measure $\Xi$ is concentrated on the subset $\Delta^*$
   of all points $x\in\Delta$ satisfying $|x|=1$ and that
   $m_0:=\int_{\Delta\setminus\{0\}}(1/(x,x))\,\Xi(dx)<\infty$.
   Concrete examples are
   the star-shaped coalescent, where $\Xi$ is the Dirac measure in $(1,0,0,\ldots)$,
   or the Poisson-Dirichlet coalescent with parameter $\theta>0$, where
   $\Xi$ is assumed to have density $x\mapsto
   (x,x)$ with respect to the Poisson-Dirichlet distribution with parameter
   $\theta>0$. Then, (\ref{cond}) and (\ref{cond3}) coincide and are both
   satisfied. Thus, Theorem \ref{main2} is applicable, i.e., all three
   random variables, $M_n/n$, $K_n/n$ and $N_n/n$, converge in distribution
   to a limiting variable $K$ with moments (\ref{kj}). As the measure $\Xi$
   is concentrated on $\Delta^*$, the Laplace exponent $\Phi(\eta)\equiv m_0$
   is constant. Therefore, $K$ has moments
   ${\rm E}(K^j)=r^jj!/((r+m_0)\cdots(jr+m_0))$, $j\in\nz$.
   It follows that $K$ is beta-distributed with parameters $1$ and $m_0/r$.

\subsection{Appendix}
\setcounter{theorem}{0}
In this appendix basic results for $\Xi$-coalescents
$R=(R_t)_{t\ge 0}$ are derived. We first restrict our attention to
coalescents with (only) multiple collisions, as the proofs are in this
case less technical. Afterwards we extend the results to
$\Xi$-coalescents. Our first result (Lemma \ref{appendix1}) concerns
the number of blocks $I_n$ of the restricted coalescent process
$(\varrho_nR_t)_{t\ge 0}$ after its first jump. Note that $I_n$ has
distribution (\ref{indist}) and that we define $I_1:=0$ for convenience.
Lemma \ref{appendix1} is well known from the literature (see, for example,
Schweinsberg \cite[Lemma 3]{schweinsberg1}), however, we provide a proof
which can be extended to the full class of coalescents with
simultaneous multiple collisions (see Lemma \ref{appendix3}).
\begin{lemma} \label{appendix1}
   Let $R$ be a $\Lambda$-coalescent. Then, for all $n\in\nz$,
   $$
   g_n{\rm E}(n-I_n)\ =\ \int_{[0,1]}\frac{(1-u)^n-1+nu}{u^2}\Lambda(du)
   $$
   with continuous extension of the function below the integral
   for $u\searrow 0$.
\end{lemma}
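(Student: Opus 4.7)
The plan is to compute $g_n\,{\rm E}(n-I_n)$ directly from its definition and reduce the resulting finite sum to the claimed integral via two elementary binomial identities.

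First, I would use that $I_n$ has distribution $r_{nk}=g_{nk}/g_n$ on $\{1,\ldots,n-1\}$, so
$$g_n\,{\rm E}(n-I_n)\ =\ \sum_{k=1}^{n-1}(n-k)g_{nk}.$$
Plugging in the $\Lambda$-coalescent formula $g_{nk}=\binom{n}{k-1}\int_{[0,1]}u^{n-k-1}(1-u)^{k-1}\Lambda(du)$ and interchanging the finite sum with the integral (Fubini/Tonelli, since the integrand is nonnegative), this reduces the claim to the pointwise algebraic identity
$$\sum_{k=1}^{n-1}(n-k)\binom{n}{k-1}u^{n-k-1}(1-u)^{k-1}\ =\ \frac{(1-u)^n-1+nu}{u^2},\qquad u\in(0,1].$$

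To verify this identity I would change variables to $\ell=k+(n-k)$... more straightforwardly, set $\ell=k+1$ so $k-1=\ell-2$ and $n-k=n-\ell+1$; a cleaner substitution is $\ell$ such that the factor $u^{\ell}(1-u)^{n-\ell}$ appears. Concretely, multiplying both sides by $u^2$ and reindexing the left side via $\ell=k+1$ (so $\ell$ runs from $2$ to $n$) gives
$$\sum_{\ell=2}^{n}(\ell-1)\binom{n}{\ell}u^{\ell}(1-u)^{n-\ell}\ =\ (1-u)^n-1+nu.$$
Split $(\ell-1)=\ell-1$ and use the two standard identities
$$\sum_{\ell=0}^{n}\binom{n}{\ell}u^\ell(1-u)^{n-\ell}=1,\qquad \sum_{\ell=0}^{n}\ell\binom{n}{\ell}u^\ell(1-u)^{n-\ell}=nu,$$
subtracting off the $\ell\in\{0,1\}$ boundary contributions $(1-u)^n$ and $nu(1-u)^{n-1}$; the $nu(1-u)^{n-1}$ terms cancel and what remains is precisely $(1-u)^n-1+nu$.

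Finally I would address the continuous extension at $u=0$: by Taylor expansion $(1-u)^n=1-nu+\binom{n}{2}u^2+O(u^3)$, so the integrand tends to $\binom{n}{2}$ as $u\searrow 0$, which makes the integral well-defined on the full interval $[0,1]$ (in particular it handles any atom $\Lambda(\{0\})$, consistent with the Kingman contribution $a\binom{n}{2}$ appearing in (\ref{totalrates2}) and (\ref{rates2})). There is no real obstacle here; the only delicate point is the bookkeeping in separating the $\ell=0,1$ boundary terms from the two binomial identities, which has to be done carefully so that the $nu(1-u)^{n-1}$ contributions cancel exactly.
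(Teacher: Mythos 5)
Your proof is correct and takes essentially the same route as the paper: both express the relevant sum over the rates $g_{nk}$ as an integral of a finite binomial sum and evaluate it via the identities $\sum_{\ell}{n\choose\ell}u^\ell(1-u)^{n-\ell}=1$ and $\sum_{\ell}\ell{n\choose\ell}u^\ell(1-u)^{n-\ell}=nu$ (the paper computes $g_n{\rm E}(I_n)$ and subtracts it from $ng_n$, while you sum $(n-k)g_{nk}$ in one go --- the same computation rearranged). The only blemish is the reindexing, which should be $\ell=n-k+1$ rather than $\ell=k+1$ in order to produce your displayed sum $\sum_{\ell=2}^{n}(\ell-1){n\choose\ell}u^{\ell}(1-u)^{n-\ell}$; the identity itself and its verification are correct.
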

\begin{proof}
   We have
   $$
   g_n{\rm E}(I_n)
   \ =\ \sum_{k=1}^{n-1} kg_{nk}
   \ =\ \sum_{k=1}^{n-1} k\int_{[0,1]}{n\choose{k-1}}u^{n-k-1}(1-u)^{k-1}\Lambda(du).
   $$
   Substituting $i=k-1$ and interchanging the summation with the integral yields
   \begin{eqnarray*}
   g_n{\rm E}(I_n)
   & = & \int_{[0,1]}
         \sum_{i=0}^{n-2} (i+1){n\choose i}u^{n-i}(1-u)^i
         \frac{\Lambda(du)}{u^2}\\
   & = & \int_{[0,1]}
         \frac{n(1-u)+1-n^2u(1-u)^{n-1}-(n+1)(1-u)^n}{u^2}
         \,\Lambda(du)\\
   & = & \int_{[0,1]}
            \frac{n+1-nu - n^2u(1-u)^{n-1}-(n+1)(1-u)^n}{u^2}
         \,\Lambda(du).\\
   \end{eqnarray*}
   Now subtract this expression from
   $$
   ng_n\ =\ \int_{[0,1]}\frac{n-n(1-u)^n-n^2u(1-u)^{n-1}}{u^2}\,\Lambda(du).
   \hspace{3cm}\Box
   $$
\end{proof}
\begin{corollary} \label{appendix2}
   If (\ref{cond2}) holds, then ${\rm E}(n-I_n)\sim n/g_n
   \int_{[0,1]}u^{-1}\Lambda(du)\to\infty$ as $n\to\infty$.
\end{corollary}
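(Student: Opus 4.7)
The plan is to start from the closed form for $g_n\mathrm{E}(n-I_n)$ provided by Lemma~\ref{appendix1}, namely
$$
\mathrm{E}(n-I_n)\ =\ \frac{1}{g_n}\int_{[0,1]}\frac{(1-u)^n-1+nu}{u^2}\Lambda(du),
$$
and then extract the asymptotics of the integral by dominated convergence. Rewrite
$$
\frac{1}{n}\cdot\frac{(1-u)^n-1+nu}{u^2}\ =\ \frac{1}{u}+\frac{(1-u)^n-1}{nu^2}.
$$
For each fixed $u\in(0,1]$ the second term tends to $0$ as $n\to\infty$, so the integrand converges pointwise to $1/u$; since $\Lambda(\{0\})=0$ by (\ref{cond2}), this convergence holds $\Lambda$-almost everywhere. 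To justify interchanging limit and integral, note that $(1-u)^n\leq 1$ implies $(1-u)^n-1+nu\leq nu$, so the integrand is bounded in absolute value by $1/u$, which is $\Lambda$-integrable by the second part of~(\ref{cond2}). Dominated convergence then gives
$$
\frac{1}{n}\int_{[0,1]}\frac{(1-u)^n-1+nu}{u^2}\Lambda(du)\ \longrightarrow\ \int_{[0,1]}u^{-1}\Lambda(du),
$$
which is precisely the claimed asymptotic equivalence $\mathrm{E}(n-I_n)\sim (n/g_n)\int u^{-1}\Lambda(du)$.

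It then remains to show that this quantity diverges. Since $\int u^{-1}\Lambda(du)\in(0,\infty)$ under (\ref{cond2}), it suffices to prove $g_n/n\to 0$. Using the expression for $g_n$ recorded right after (\ref{totalrates2}) in the $\Lambda$-case,
$$
\frac{g_n}{n}\ =\ \int_{[0,1]}\left(\frac{1-(1-u)^n}{nu^2}-\frac{(1-u)^{n-1}}{u}\right)\Lambda(du).
$$
For fixed $u\in(0,1]$ both summands in the bracket tend to $0$, while Bernoulli's inequality $1-(1-u)^n\leq nu$ together with $(1-u)^{n-1}\leq 1$ shows that the bracket is dominated in absolute value by $2/u$. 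A second application of dominated convergence, again legal by (\ref{cond2}), yields $g_n/n\to 0$ and hence $n/g_n\to\infty$, completing the proof.

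Neither step presents a real obstacle: once Lemma~\ref{appendix1} is in hand, the whole argument reduces to checking an integrable dominating function and a pointwise limit. The only place to be a little careful is the behaviour of the integrands near $u=0$, but the hypothesis $\int u^{-1}\Lambda(du)<\infty$ together with the uniform bounds $(1-u)^n\leq 1$ and $(1-u)^n\geq 1-nu$ handle this uniformly in $n$.
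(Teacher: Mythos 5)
Your proof is correct and follows essentially the same route as the paper: both start from Lemma \ref{appendix1}, split off the dominant term $n/u$ (the paper writes this as $g_n{\rm E}(n-I_n)=nH(1)-H(n)$ with $H(n):=\int_{[0,1]}(1-(1-u)^n)u^{-2}\Lambda(du)$), and kill the remainder and $g_n/n$ by dominated convergence against the finite measure $\Lambda(du)/u$. The only cosmetic difference is that the paper obtains $g_n/n\to 0$ for free from the pointwise bound $g_n\le H(n)$ rather than via a second application of dominated convergence.
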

\begin{proof}
   For $n\in\nz$ define the auxiliary function $H(n):=\int_{[0,1]}(1-(1-u)^n)u^{-2}\Lambda(du)$.
   Note that $1-(1-u)^n\le nu$ for $n\in\nz$ and $u\in [0,1]$ and therefore
   $H(n)\le n\int_{[0,1]}\Lambda(du)/u=nH(1)<\infty$ for all
   $n\in\nz$. By Lemma \ref{appendix1}, $g_n{\rm E}(n-I_n)=nH(1)-H(n)$.
   If we can show that $g_n/n\to 0$ and that $H(n)/n\to 0$ as $n\to\infty$,
   then,
   $$
   {\rm E}(n-I_n)\ =\ \frac{n}{g_n}\Big(H(1)-\frac{H(n)}{n}\Big)
   \ \sim\ \frac{n}{g_n}H(1)\ \to\ \infty
   $$
   and we are done. Since
   \begin{eqnarray*}
      g_n
      & = & \int_{[0,1]}\frac{1-(1-u)^n-nu(1-u)^{n-1}}{u^2}\Lambda(du)\\
      & \le & \int_{[0,1]}\frac{1-(1-u)^n}{u^2}\Lambda(du)\ =\ H(n),
   \end{eqnarray*}
   it remains to verify that $H(n)/n\to 0$ as $n\to\infty$.
   By assumption, the measure $\mu(du):=\Lambda(du)/u$ is finite and has
   no mass at zero. We have
   $$
   \frac{H(n)}{n}
   \ =\ \int_{[0,1]}\frac{1-(1-u)^n}{nu}\frac{\Lambda(du)}{u}
   \ =\ \int_{[0,1]}f_n(u)\mu(du),
   $$
   where $f_n(u):=(1-(1-u)^n)/(nu)$ for $n\in\nz$ and $u\in[0,1]$.
   Obviously, $0\le f_n\le 1$ for all $n\in\nz$ and $f_n$ converges
   pointwise to zero on $(0,1]$ as $n\to\infty$.
   Thus, $H(n)/n\to 0$ as $n\to\infty$ by dominated convergence.\hfill$\Box$
\end{proof}
In the following Lemma \ref{appendix1} is extended to $\Xi$-coalescents.
\begin{lemma} \label{appendix3}
   Let $\Xi=a\delta_0+\Xi_0$ be a finite measure on the infinite simplex
   $\Delta$ and let $(R_t)_{t\ge 0}$ be a $\Xi$-coalescent.
   For $n\in\nz$ let $I_n$ be the number of equivalence classes (blocks)
   of the restricted coalescent process $(\varrho_nR_t)_{t\ge 0}$ after
   its first jump ($I_1:=0$). Then, for all $n\in\nz$,
   \begin{equation} \label{appendix3formula}
      g_n{\rm E}(n-I_n)\ =\ a{n\choose 2} +
      \int_\Delta\bigg(
         n|x|-\sum_{i=1}^\infty\big(1-(1-x_i)^n\big)
      \bigg)
      \frac{\Xi_0(dx)}{(x,x)}.
   \end{equation}
\end{lemma}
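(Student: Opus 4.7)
My plan is to prove (\ref{appendix3formula}) by starting from $g_n{\rm E}(n-I_n)=\sum_{k=1}^{n-1}(n-k)g_{nk}$ and exploiting the paint-box construction of the $\Xi$-coalescent due to Schweinsberg \cite{schweinsberg2}, rather than grinding through the explicit formula (\ref{rates2}) as was done in the $\Lambda$-case of Lemma \ref{appendix1}. First, split $\Xi=a\delta_0+\Xi_0$. The Kingman component contributes only via the $k=n-1$ term: a binary merger at rate $a{n\choose 2}$ lowers the block number by exactly one, giving the clean summand $a{n\choose 2}$.

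For the $\Xi_0$ part, the rate of a jump whose paint-box parameter lies in an infinitesimal neighborhood of $x\in\Delta\setminus\{0\}$ is $\Xi_0(dx)/(x,x)$. Conditional on such $x$, each of the $n$ lineages is independently assigned color $i\in\nz$ with probability $x_i$ or declared a fresh singleton with probability $1-|x|$. Let $N_i$ be the (binomial$(n,x_i)$) number of lineages receiving color $i$. Then the post-event block number is $I=(n-\sum_{i=1}^\infty N_i)+\sum_{i=1}^\infty 1_{\{N_i\geq 1\}}$, so that
$$n-I\ =\ \sum_{i=1}^\infty\big(N_i-1_{\{N_i\geq 1\}}\big).$$
Taking expectations term by term, using ${\rm E}(N_i)=nx_i$ and $P(N_i\geq 1)=1-(1-x_i)^n$, yields ${\rm E}_x(n-I)=n|x|-\sum_{i=1}^\infty(1-(1-x_i)^n)$. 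Integrating against $\Xi_0(dx)/(x,x)$ and adding the Kingman contribution produces the right-hand side of (\ref{appendix3formula}).

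Two points require attention: (i) the bookkeeping that $\sum_{k=1}^{n-1}(n-k)\sum_{j=1}^k f_{nkj}(x)$ coincides with $n|x|-\sum_i(1-(1-x_i)^n)$, which is essentially a restatement of how (\ref{rates2}) was derived from the paint-box in \cite{schweinsberg2}; and (ii) the exchange of summation over $i$ with expectation and integration. Point (ii) is the main technical hurdle; I would handle it via the bound $0\le nx_i-(1-(1-x_i)^n)\le{n\choose 2}x_i^2$, a two-term Taylor estimate obtained by checking that $\phi(u):=(1-u)^n-1+nu-{n\choose 2}u^2$ is concave on $[0,1]$ with $\phi(0)=\phi'(0)=0$. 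Summing over $i$ gives $n|x|-\sum_i(1-(1-x_i)^n)\le{n\choose 2}(x,x)$; dividing by $(x,x)$ leaves an integrand uniformly bounded by ${n\choose 2}$, and Fubini applies against the finite measure $\Xi_0$. A purely combinatorial alternative would be to expand the multi-indexed sum in (\ref{rates2}) and regroup terms, mirroring the proof of Lemma \ref{appendix1}, but this route is substantially more tedious due to the distinct-indices and multinomial factors in $f_{nkj}$.
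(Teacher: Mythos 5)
Your proof is correct and takes essentially the same route as the paper: both rest on Schweinsberg's paintbox construction, the identification of the color counts as binomial$(n,x_i)$ variables, and the identity $n-I=\sum_{i}\bigl(N_i-1_{\{N_i\ge 1\}}\bigr)$ (the paper phrases this as $I=X_0+\sum_i 1_{\{X_i\ge 1\}}$ and obtains the result by subtracting $g_n{\rm E}(I_n)$ from $ng_n$). The one point to make explicit is that $\Xi_0(dx)/(x,x)$ is the intensity of paintbox events, not of jumps of $\varrho_nR$; your identity $g_n{\rm E}(n-I_n)=\int_\Delta {\rm E}_x(n-I)\,\Xi_0(dx)/(x,x)$ is nevertheless exact because $n-I$ vanishes on the no-jump event $\bigcap_i\{N_i\le 1\}$ --- the paper handles the same issue via an explicit $-nP\bigl(\bigcap_i\{X_i\le 1\}\bigr)$ term that cancels against the corresponding term in $ng_n$, while your Taylor bound $0\le n|x|-\sum_i(1-(1-x_i)^n)\le\binom{n}{2}(x,x)$ supplies the integrability the paper leaves implicit.
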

\begin{proof}
   Fix $n\in\nz$. The first summand on the right hand side in
   (\ref{appendix3formula}) is obvious, because
   with probability $a=\Xi(\{0\})$, the coalescent behaves
   as the Kingman coalescent, in which case we have
   $I_n=n-1$ and $g_n={n\choose 2}$. Thus, without loss of
   generality we can and do assume that $a=0$.

   In the following we exploit Schweinsberg's \cite{schweinsberg2} Poisson
   process construction of exchangeable coalescents. Note that this
   construction is essentially equivalent to Kingman's \cite{kingman1} paintbox
   construction and closely related to the Bernoulli sieve \cite{gnedin}.

   For given $x\in\Delta$ partition $[0,1)$ into
   intervals $J_0,J_1,J_2,\ldots$ of lengths $x_0:=1-|x|,x_1,x_2,\ldots$,
   i.e., $J_0:=[0,x_0)$, $J_1:=[x_0,x_0+x_1)$, $J_2:=[x_0+x_1,x_0+x_1+x_2)$
   and so on. Let $U_1,\ldots,U_n$ be independent random variables uniformly
   distributed on $[0,1)$. For $i\in\nz_0$ let
   $$
   X_i\ :=\ X_i(n)\ :=\ \sum_{j=1}^n 1_{J_i}(U_j)
   $$
   denote the number of $U_1,\ldots,U_n$ which fall into the interval $J_i$.
   Note that $X_i$ is binomially distributed with parameters $n$ and $x_i$
   and that $\sum_{i=0}^\infty X_i=n$. Therefore,
   \begin{eqnarray*}
      &   & \hspace{-15mm}P\Big(\bigcap_{i\in\nz}\{X_i\le 1\}\Big)\\
      & = & P(X_0=n) +
            \sum_{l=1}^n \sum_{{i_1,\ldots,i_l\in\nz}\atop{\rm all\,distinct}}
            P(X_0=n-l,X_{i_1}=1,\ldots,X_{i_l}=1)\\
      & = & x_0^n + \sum_{l=1}^n {n\choose l}x_0^{n-l}
            \sum_{{i_1,\ldots,i_l\in\nz}\atop{\rm all\,distinct}}
            x_{i_1}\cdots x_{i_l}.
   \end{eqnarray*}
   We have
   \begin{eqnarray}
      &   & \hspace{-15mm}g_n{\rm E}(I_n)\nonumber\\
      & = & \sum_{k=1}^{n-1}kg_{nk}
      \ = \ \sum_{k=1}^{n-1}k\int_\Delta
            P\Big(X_0+\sum_{i=1}^\infty 1_{\{X_i\ge 1\}}=k\Big)\frac{\Xi_0(dx)}{(x,x)}
            \nonumber\\
      & = & \int_\Delta
            \sum_{k=1}^{n-1}kP\Big(X_0+\sum_{i=1}^\infty 1_{\{X_i\ge 1\}}=k\Big)
            \frac{\Xi_0(dx)}{(x,x)}\nonumber\\
      & = & \int_\Delta
            \bigg(
               {\rm E}(X_0)
               + \sum_{i=1}^\infty P(X_i\ge 1)
               - nP\Big(\bigcap_{i=1}^\infty \{X_i\le 1\}\Big)
            \bigg)\frac{\Xi_0(dx)}{(x,x)}. \label{gnein}
   \end{eqnarray}
   Now subtract this expression from (see Schweinsberg
   \cite[p.~36, Eq. (70)]{schweinsberg2})
   \begin{eqnarray*}
      ng_n
      & = & n\int_\Delta
            \Bigg(
               1-x_0^n-\sum_{l=1}^n {n\choose l}x_0^{n-l}
               \sum_{{i_1,\ldots,i_l\in\nz}\atop{\rm all\,distinct}}
               x_{i_1}\cdots x_{i_l}
            \Bigg)\frac{\Xi_0(dx)}{(x,x)}\\
      & = & \int_\Delta \Bigg(
               n-nP\Big(\bigcap_{i=1}^\infty\{X_i\le 1\}\Big)
            \Bigg)\frac{\Xi_0(dx)}{(x,x)}
   \end{eqnarray*}
   and note that ${\rm E}(X_0)=n(1-|x|)$ and that
   $P(X_i\ge 1)=1-(1-x_i)^n$.\hfill$\Box$
\end{proof}
For $n\in\nz\setminus\{1\}$ we now study the number
$V_n$ of internal branches of the
restricted coalescent process which start after the time $T_n$
of the first jump of the restricted coalescent process
$(\varrho_nR_t)_{t\ge 0}$. Note that $V_n=I_n-S_n$, where
$S_n$ denotes the number of singleton blocks of the restricted
coalescent process $(\varrho_nR_t)_{t\ge 0}$ after its first
jump.
\begin{lemma} \label{appendix4}
   For all $n\in\nz\setminus\{1\}$,
   \begin{equation} \label{appendix4formula}
      g_n{\rm E}(V_n)
      \ =\ a{n\choose 2} +
      \int_\Delta \sum_{i=1}^\infty
      \big(1-(1-x_i)^n-nx_i(1-x_i)^{n-1}\big)
      \frac{\Xi_0(dx)}{(x,x)}.
   \end{equation}
\end{lemma}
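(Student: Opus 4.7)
The plan is to mimic the proof of Lemma \ref{appendix3}, using Schweinsberg's Poisson/paintbox construction, but now counting non-singleton blocks instead of total blocks.

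First I would split $V_n$ according to the decomposition $\Xi=a\delta_0+\Xi_0$. With probability (rate) $a\binom{n}{2}$, a Kingman-type binary merger occurs at the first jump; this produces exactly one internal branch, contributing $a\binom{n}{2}$ to $g_n{\rm E}(V_n)$. So it remains to handle the $\Xi_0$-part.

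Next I would recall from the proof of Lemma \ref{appendix3} the paintbox setup: for fixed $x\in\Delta$, partition $[0,1)$ into intervals $J_0,J_1,J_2,\ldots$ of lengths $1-|x|,x_1,x_2,\ldots$, drop $n$ i.i.d.\ uniforms $U_1,\ldots,U_n$, and let $X_i=\sum_{j=1}^n 1_{J_i}(U_j)$, which is binomial$(n,x_i)$. In this construction the blocks of the partition produced by an $x$-driven jump are the nonempty clusters $\{j:U_j\in J_i\}$ together with the $X_0$ singletons coming from $J_0$. An internal branch starts at this jump if and only if it corresponds to an $i\ge 1$ with $X_i\ge 2$ (merging two or more lineages). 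Hence, conditional on such a jump occurring, the number of internal branches started is $\sum_{i=1}^\infty 1_{\{X_i\ge 2\}}$. Note that, in contrast to the $I_n$-calculation, no correction for the ``no-coalescence'' event is required, because on that event all $X_i\le 1$ and the sum is zero.

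Integrating against the rate measure $\Xi_0(dx)/(x,x)$ and using Fubini to interchange sum, expectation and integral gives
\[
g_n{\rm E}(V_n)\ =\ a\binom{n}{2}\ +\ \int_\Delta \sum_{i=1}^\infty P(X_i\ge 2)\,\frac{\Xi_0(dx)}{(x,x)},
\]
and the identity $P(X_i\ge 2)=1-(1-x_i)^n-nx_i(1-x_i)^{n-1}$ (from the binomial distribution) yields the stated formula.

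The only nontrivial point is the identification of $V_n$ with $\sum_{i\ge 1}1_{\{X_i\ge 2\}}$ in the paintbox construction and the justification that a Fubini/monotone-convergence step is allowed (the summands are nonnegative, so interchanging sum and integral is automatic). Thus I expect no real obstacle beyond carefully matching the paintbox bookkeeping to the definition $V_n=I_n-S_n$; in particular, confirming that singletons contributed by $J_0$ never become internal branches, while each $J_i$ with $i\ge 1$ and $X_i\ge 2$ contributes exactly one internal branch.
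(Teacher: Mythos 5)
Your proposal is correct and follows essentially the same route as the paper: Schweinsberg's paintbox construction with $X_i\sim\mathrm{binomial}(n,x_i)$, the identity $g_n{\rm E}(V_n)=\int_\Delta\sum_{i\ge 1}P(X_i\ge 2)\,\Xi_0(dx)/(x,x)$, and the binomial tail $P(X_i\ge 2)=1-(1-x_i)^n-nx_i(1-x_i)^{n-1}$. The only (harmless) difference is that you identify $V_n$ directly with $\sum_{i\ge 1}1_{\{X_i\ge 2\}}$ and correctly note that no correction for the no-collision event is needed, whereas the paper reaches the same integrand by computing $g_n{\rm E}(S_n)$ explicitly and subtracting it from the expression for $g_n{\rm E}(I_n)$ obtained in Lemma \ref{appendix3}.
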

\begin{proof}
   Fix $n\in\nz\setminus\{1\}$.
   Again, without loss of generality we can and do assume that $a=0$.
   Using the notation of the previous proof it follows that
   \begin{eqnarray*}
      g_n{\rm E}(S_n)
      & = & \sum_{s=0}^{n-1} s\int_\Delta
               P\Big(X_0+\sum_{i=1}^\infty 1_{\{X_i=1\}}=s\Big)
            \frac{\Xi_0(dx)}{(x,x)}\\
      & = & \int_\Delta \sum_{s=0}^{n-1} s
            P\Big(X_0+\sum_{i=1}^\infty 1_{\{X_i=1\}}=s\Big)
            \frac{\Xi_0(dx)}{(x,x)}\\
      & = & \int_\Delta
            \bigg(
               {\rm E}(X_0)+\sum_{i=1}^\infty P(X_i=1)
               - n P\Big(\bigcap_{i=1}^\infty \{X_i\le 1\}\Big)
            \bigg)\frac{\Xi_0(dx)}{(x,x)}.
   \end{eqnarray*}
   If we subtract this quantity from the expression (\ref{gnein}) already
   derived for $g_n{\rm E}(I_n)$ we arrive at
   $$
   g_n{\rm E}(V_n)
   \ =\ \int_\Delta\sum_{i=1}^\infty P(X_i\ge 2)\frac{\Xi_0(dx)}{(x,x)}
   $$
   and the lemma follows from $P(X_i\ge 2)=
   1-(1-x_i)^n-nx_i(1-x_i)^{n-1}$.\hfill$\Box$
\end{proof}
\begin{remark}
   Fix $n\in\nz\setminus\{1\}$.
   For $\Lambda$-coalescents, (\ref{appendix4formula}) reduces to
   $$
   g_n{\rm E}(V_n)\ =\ \int_{[0,1]}(1-(1-x)^n-nx(1-x)^{n-1})\frac{\Lambda(dx)}{x^2}
   \ =\ g_n.
   $$
   Thus, ${\rm E}(V_n)=1$, which is clear, as $V_n\equiv 1$ for coalescents
   with only multiple (no simultaneous multiple) collisions.
\end{remark}
\begin{corollary} \label{appendix5}
   If (\ref{cond}) holds, then
   $\lim_{n\to\infty}{\rm E}(n-I_n)/{\rm E}(V_n)=\infty$.
\end{corollary}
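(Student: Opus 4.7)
The plan is to exploit the identities in Lemmas \ref{appendix3} and \ref{appendix4}, observe that condition (\ref{cond}) forces $a=\Xi(\{0\})=0$ so that the Kingman term $a{n\choose 2}$ in both formulas vanishes, and then show separately that $g_n{\rm E}(n-I_n)/n$ converges to a strictly positive constant while $g_n{\rm E}(V_n)/n$ tends to zero. Dividing one by the other then yields the claim.

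For the numerator I would rewrite $n|x|=\sum_{i=1}^\infty nx_i$ to bring the factor $1/n$ inside the inner sum, yielding
$$
\frac{g_n{\rm E}(n-I_n)}{n}\ =\ \int_{\Delta\setminus\{0\}}\sum_{i=1}^\infty\Big(x_i-\frac{1-(1-x_i)^n}{n}\Big)\frac{\Xi(dx)}{(x,x)}.
$$
Each summand is nonnegative by Bernoulli's inequality, bounded above by $x_i$, and converges pointwise to $x_i$ as $n\to\infty$. A first application of dominated convergence on counting measure (dominating sequence $(x_i)_i$ summing to $|x|$) gives that the inner sum tends pointwise in $x$ to $|x|$ while staying bounded by $|x|$. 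A second application, now using that $\int|x|\,\Xi(dx)/(x,x)<\infty$ by (\ref{cond}), yields
$$
\frac{g_n{\rm E}(n-I_n)}{n}\ \longrightarrow\ c\ :=\ \int_{\Delta\setminus\{0\}}|x|\,\frac{\Xi(dx)}{(x,x)}\ \in\ (0,\infty).
$$

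For the denominator each summand in the integrand of (\ref{appendix4formula}) satisfies $1-(1-x_i)^n-nx_i(1-x_i)^{n-1}\le 1-(1-x_i)^n\le nx_i$, so after dividing by $n$ it is bounded by $\min(1/n,x_i)\le x_i$ and tends to $0$ for every fixed $i$. Dominated convergence on counting measure then shows that the inner sum divided by $n$ tends to $0$ pointwise in $x$, and a further dominated convergence with envelope $|x|$ (integrable by (\ref{cond})) gives $g_n{\rm E}(V_n)/n\to 0$. Combining the two limits,
$$
\frac{{\rm E}(n-I_n)}{{\rm E}(V_n)}\ =\ \frac{g_n{\rm E}(n-I_n)/n}{g_n{\rm E}(V_n)/n}\ \longrightarrow\ \infty,
$$
which is the assertion. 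The main technical point is the nested use of dominated convergence (over the components $x_i$ inside, then over the simplex $\Delta$ outside); in both cases condition (\ref{cond}) supplies the integrable envelope $|x|$ needed to interchange the outer integral with the limit $n\to\infty$.
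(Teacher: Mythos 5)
Your proof is correct and follows essentially the same route as the paper: both arguments rest on the identity $g_n{\rm E}(n-I_n)=nH(1)-H(n)$ with $H(n):=\int_{\Delta\setminus\{0\}}\sum_i(1-(1-x_i)^n)\,\Xi(dx)/(x,x)$, the bound $1-(1-x_i)^n-nx_i(1-x_i)^{n-1}\le 1-(1-x_i)^n$ for the $V_n$ term, and the nested dominated convergence (counting measure inside, the finite measure $|x|\Xi(dx)/(x,x)$ outside) to get $H(n)/n\to 0$. The only cosmetic difference is that you compute the two limits $g_n{\rm E}(n-I_n)/n\to H(1)>0$ and $g_n{\rm E}(V_n)/n\to 0$ separately and divide, whereas the paper bounds the ratio below by $nH(1)/H(n)-1$; the content is identical.
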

\begin{proof}
   Define the auxiliary function $H:\nz\to\rz$ via
   $$
   H(n)\ :=\ \int_{\Delta\setminus\{0\}} \sum_{i=1}^\infty
   \big(1-(1-x_i)^n\big)\frac{\Xi(dx)}{(x,x)},\quad n\in\nz.
   $$
   Note that $1-(1-x_i)^n\le nx_i$ for $n\in\nz$ and $x_i\in[0,1]$, and,
   therefore,
   $$
   0\ <\ H(n)
   \ \le n\int_{\Delta\setminus\{0\}}
   |x|\frac{\Xi(dx)}{(x,x)}\ =\ nH(1)\ <\ \infty.
   $$
   We rewrite (\ref{appendix3formula}) in terms of
   the auxiliary function $H$ as $g_n{\rm E}(n-I_n)=nH(1)-H(n)$.
   Moreover, from (\ref{appendix4formula}) it follows
   that $g_n{\rm E}(V_n)\le H(n)$. Thus,
   $$
   \frac{{\rm E}(n-I_n)}{{\rm E}(V_n)}
   \ \geq\ \frac{nH(1)-H(n)}{H(n)}\ =\ \frac{nH(1)}{H(n)}-1.
   $$
   It remains to verify that $\lim_{n\to\infty}H(n)/n=0$. By assumption,
   the measure $\mu(dx):=(|x|/(x,x))\Xi(dx)$ is finite and has no mass at zero.
   We have
   $$
   H(n)\ =\ \int_{\Delta\setminus\{0\}} f_n(x)\mu(dx),
   $$
   where $f_n(x):=\sum_{i=1}^\infty(1-(1-x_i)^n)/(n|x|)$ for $n\in\nz$
   and $x\in\Delta\setminus\{0\}$. From $1-(1-x_i)^n\le nx_i$ for $x_i\in [0,1]$
   it follows that $0\le f_n\le 1$ for all $n\in\nz$. It is shown below that
   $f_n$ converges pointwise to zero on $\Delta\setminus\{0\}$ as $n\to\infty$.
   Therefore, $H(n)/n\to 0$ as $n\to\infty$ by dominated convergence and the
   corollary is established. In order to verify the pointwise convergence
   of $f_n$ to zero fix $x\in\Delta\setminus\{0\}$ and let
   $\delta_\nz$ denote the counting measure on $\nz$. We have
   $$
   |x|f_n(x)\ =\ \sum_{i=1}^\infty \frac{1-(1-x_i)^n}{n}
   \ =\ \int g_nd\delta_\nz
   $$
   with $g_n:\nz\to\rz$ defined via $g_n(i):=(1-(1-x_i)^n)/n$. Obviously
   $g_n\to 0$ pointwise as $n\to\infty$, as $0\le g_n\leq 1/n$ for all $n\in\nz$.
   Moreover, $g_n(i)\le x_i=:g(i)$ for all $n\in\nz$. The function $g$ is
   integrable with respect to the counting measure $\varepsilon_\nz$
   ($\int gd\delta_\nz=\sum_{i=1}^\infty x_i\le 1$). Thus, $f_n(x)\to 0$
   as $n\to\infty$ by dominated convergence.\hfill$\Box$
\end{proof}
\begin{acknowledgement}
   The authors thank Alex Iksanov for fruitful comments concerning the remark
   before Corollary \ref{char} and the final remark of Section \ref{mn}.
\end{acknowledgement}
%
%


\begin{thebibliography}{99}
   \bibitem{basdevantgoldschmidt}
      {\sc Basdevant, A.-L. and Goldschmidt, C.} (2008)
      Asymptotics of the allele frequency spectrum associated
      with the Bolthausen-Sznitman coalescent.
      {\em Electron. J. Probab.} {\bf 13}, 486--512.
      \href{http://www.ams.org/mathscinet-getitem?mr2386740}{MR2386740}
   \bibitem{berestyckischweinsberg1}
      {\sc Berestycki, J., Berestycki, N., and Schweinsberg, J.} (2007)
      Beta-coalescents and continuous stable random trees.
      {\em Ann. Probab.} {\bf 35}, 1835--1887.
      \href{http://www.ams.org/mathscinet-getitem?mr2349577}{MR2349577}
   \bibitem{berestyckischweinsberg2}
      {\sc Berestycki, J., Berestycki, N., and Schweinsberg, J.} (2008)
      Small-time behavior of Beta-coalescents.
      {\em Ann. Inst. H. Poincar\'e Probab. Statist.} {\bf 44}, 214--238.
      MR number not yet available
   \bibitem{bertoinlegall}
      {\sc Bertoin, J. and Le~Gall, J.-F.} (2003)
      Stochastic flows associated to coalescent processes.
      {\em Probab. Theory Relat. Fields} {\bf 126}, 261--288.
      \href{http://www.ams.org/mathscinet-getitem?mr1990057}{MR1990057}
   \bibitem{bolthausensznitman}
      {\sc Bolthausen, E., Sznitman, A.-S.} (1998)
      On Ruelle's probability cascades and an abstract cavity method.
      {\em Comm. Math. Phys.} {\bf 197}, 247--276.
      \href{http://www.ams.org/mathscinet-getitem?mr1652734}{MR1652734}
   \bibitem{caliebeneiningerkrawczakroesler}
      {\sc Caliebe, A., Neininger, R., Krawczak, M. and R\"osler, U.} (2007)
      On the length distribution of external branches in
      coalescence trees: Genetic diversity within species.
      {\em Theoret. Popul. Biol.} {\bf 72}, 245--252.
      \href{http://www.zentralblatt-math.org/zmath/en/search/?q=an:1123.92024&format=complete}{Zbl 1123.92024}
   \bibitem{carmonapetityor}
      {\sc Carmona, P., Petit, F. and Yor, M.} (1997)
      On the distribution and asymptotic results for exponential
      integrals of L{\'e}vy processes. In {\em Exponential Functionals
      and Principal Values Related to Brownian Motion}, ed. M. Yor,
      Biblioteca de la Revista Matematica Iberoamericana, Madrid,
      pp. 73--121.
      \href{http://www.ams.org/mathscinet-getitem?mr1648657}{MR1648657}
   \bibitem{delmasdhersinsirijegousse}
      {\sc Delmas, J.-F., Dhersin, J.-S. and Siri-Jegousse, A.} (2008)
      Asymptotic results on the length of coalescent trees.
      {\em Ann. Appl. Probab.} {\bf 18}, 997--1025.
      \href{http://www.ams.org/mathscinet-getitem?mr2418236}{MR2418236}
   \bibitem{drmotaiksanovmoehleroesler1}
      {\sc Drmota, M., Iksanov, A., M\"ohle, M. and R\"osler, U.} (2007)
      Asymptotic results concerning the total branch length of the
      Bolthausen-Sznitman coalescent.
      {\em Stoch. Process. Appl.} {\bf 117}, 1404--1421.
      \href{http://www.ams.org/mathscinet-getitem?mr2353033}{MR2353033}
   \bibitem{drmotaiksanovmoehleroesler2}
      {\sc Drmota, M., Iksanov, A., M\"ohle, M. and R\"osler, U.} (2008)
      A limiting distribution for the number of cuts needed to
      isolate the root of a random recursive tree.
      {\em Random Struct. Algorithms}, to appear.
      MR number not yet available
   \bibitem{ewens}
      {\sc Ewens, W.J.} (1972)
      The sampling theory of selectively neutral alleles.
      {\em Theoret. Popul. Biol.} {\bf 3}, 87--112.
      \href{http://www.ams.org/mathscinet-getitem?mr=0325177}{MR0325177}
   \bibitem{freundmoehle1}
      {\sc Freund, F. and M\"ohle, M.} (2007)
      On the time back to the most recent common ancestor and the
      external branch length of the Bolthausen-Sznitman
      coalescent. Preprint.
   \bibitem{gnedin}
      {\sc Gnedin, A.} (2004) The Bernoulli sieve.
      {\em Bernoulli} {\bf 10}, 79--96.
      \href{http://www.ams.org/mathscinet-getitem?mr2044594}{MR2044594}
   \bibitem{gnedinyakubovich}
      {\sc Gnedin, A. and Yakubovich, Y.} (2007)
      On the number of collisions in $\Lambda$-coalescents.
      {\em Electron. J. Probab.} {\bf 12}, 1547--1567.
      \href{http://www.ams.org/mathscinet-getitem?mr2365877}{MR2365877}
   \bibitem{iksanovmarynychmoehle}
      {\sc Iksanov, A., Marynych, A. and M\"ohle, M.} (2007)
      On the number of collisions in beta(2,b)-coalescents.
      Preprint.
   \bibitem{iksanovmoehle1}
      {\sc Iksanov, A. and M\"ohle, M.} (2007)
      A probabilistic proof of a weak limit law for the number of
      cuts needed to isolate the root of a random recursive tree.
      {\em Electron. Commun. Probab.} {\bf 12}, 28--35.
      \href{http://www.zentralblatt-math.org/zmath/en/search/?q=an:1133.60012&format=complete}{Zbl 1133.60012}
   \bibitem{iksanovmoehle2}
      {\sc Iksanov, A. and M\"ohle, M.} (2008)
      On the number of jumps of random walks with a barrier.
      {\em Adv. Appl. Probab.} {\bf 40}, 206--228.
      \href{http://www.ams.org/mathscinet-getitem?mr2411821}{MR2411821}
   \bibitem{kingman1}
      {\sc Kingman, J.F.C.} (1982)
      On the genealogy of large populations.
      {\em J. Appl. Probab.} {\bf 19A}, 27--43.
      \href{http://www.ams.org/mathscinet-getitem?mr0633178}{MR0633178}
   \bibitem{moehleoberwolf}
      {\sc M\"ohle, M.} (2005) Coalescent theory - simultaneous multiple
      collisions and sampling distributions.
      {\em Oberwolfach Reports} {\bf 40}, 2279--2282.
   \bibitem{moehleewens}
      {\sc M\"ohle, M.} (2006)
      On sampling distributions for coalescent processes with
      simultaneous multiple collisions.
      {\em Bernoulli} {\bf 12}, 35--53.
      \href{http://www.ams.org/mathscinet-getitem?mr2202319}{MR2202319}
   \bibitem{moehlesites}
      {\sc M\"ohle, M.} (2006)
      On the number of segregating sites for populations with large family sizes.
      {\em Adv. Appl. Probab.} {\bf 38}, 750-767.
      \href{http://www.ams.org/mathscinet-getitem?mr2256876}{MR2256876}
   \bibitem{moehlesagitov}
      {\sc M\"ohle, M. and Sagitov, S.} (2001)
      A classification of coalescent processes for haploid exchangeable
      population models.
      {\em Ann. Probab.} {\bf 29}, 1547--1562.
      \href{http://www.ams.org/mathscinet-getitem?mr2024501}{MR2024501}
   \bibitem{pitman}
      {\sc Pitman, J.} (1999)
      Coalescents with multiple collisions.
      {\em Ann. Probab.} {\bf 27}, 1870--1902.
      \href{http://www.ams.org/mathscinet-getitem?mr1742892}{MR1742892}
   \bibitem{sagitov}
      {\sc Sagitov, S.} (2003)
      Convergence to the coalescent with simultaneous multiple mergers.
      {\em J. Appl. Probab.} {\bf 40}, 839--854.
      \href{http://www.ams.org/mathscinet-getitem?mr=2012671}{MR2012671}
   \bibitem{schweinsberg1}
      {\sc Schweinsberg, J.} (2000a)
      A necessary and sufficient condition for the $\Lambda$-coalescent
      to come down from infinity.
      {\em Electron. Commun. Probab.} {\bf 5}, 1--11.
      \href{http://www.ams.org/mathscinet-getitem?mr1736720}{MR1736720}
   \bibitem{schweinsberg2}
      {\sc Schweinsberg, J.} (2000b)
      Coalescents with simultaneous multiple collisions.
      {\em Electron. J. Probab.} {\bf 5}, 1--50.
      \href{http://www.ams.org/mathscinet-getitem?mr1781024}{MR1781024}
   \bibitem{vervaat}
      {\sc Vervaat, W.} (1979)
      On a stochastic difference equation and a representation of non-negative
      infinitely divisible random variables.
      {\em Adv. Appl. Probab.} {\bf 11}, 750--783.
      \href{http://www.ams.org/mathscinet-getitem?mr0544194}{MR0544194}
\end{thebibliography}
\end{document}